\documentclass[11pt,a4paper]{article}

\usepackage{graphicx}
\usepackage{multirow}
\usepackage{amsmath,amssymb,amsfonts}
\usepackage{amsthm}
\usepackage{mathrsfs}
\usepackage[title]{appendix}
\usepackage{xcolor}
\usepackage{textcomp}
\usepackage{booktabs}
\usepackage{algorithm}
\usepackage{hyperref}
\usepackage{algpseudocode}
\usepackage{listings}
\usepackage{enumitem}
\usepackage[normalem]{ulem}
\usepackage{authblk} 

\theoremstyle{plain} 
\newtheorem{theorem}{Theorem}[section] 
\newtheorem{lemma}[theorem]{Lemma}
\newtheorem{corollary}[theorem]{Corollary}
\newtheorem{example}[theorem]{Example}
\newtheorem{remark}[theorem]{Remark}
\newtheorem{proposition}[theorem]{Proposition}

\theoremstyle{definition} 
\newtheorem{definition}[theorem]{Definition}

\numberwithin{equation}{section} 

\begin{document}
	
	\title{The best Musielak-Orlicz approximation by linear subspaces}
	
	\author[1]{Juan Costa Ponce}
	\author[2]{Sergio Favier}
	\author[3]{Fabi\'an Levis}
	
	\affil[1]{Instituto de Investigaciones Matem\'aticas Luis A. Santal\'o (IMAS), CONICET--UBA, Buenos Aires, Argentina}
	\affil[2]{Instituto de Matem\'atica Aplicada San Luis Dr. Ezio Marchi (IMASL), CONICET--UNSL, San Luis, Argentina}
	\affil[3]{Departamento de Matem\'atica, FCEFQyN, Universidad Nacional de R\'io Cuarto -- CONICET, R\'io Cuarto, Argentina}
	
	\affil[ ]{\vspace{0.5em} \texttt{\small costaponcejuan@gmail.com, sfavier@unsl.edu.ar, flevis@exa.unrc.edu.ar}}
	
	\date{\today} 
	
	\maketitle
	
	\begin{abstract}
		We study best modular approximation in Musielak--Orlicz spaces. A geometric classification near the origin of Orlicz functions is introduced to isolate weighted $L^1$-like behavior. We establish existence results under minimal hypotheses and provide a variational characterization of minimizers via lateral derivatives. Finally, we establish uniqueness criteria, including a one-dimensional Haar-type theorem and characterizations in $0$- and $1$-spaces in higher dimensions.
	\end{abstract}
	
	\vspace{1em}
	\noindent\textbf{Keywords:} Musielak-Orlicz Spaces, Best Approximation, Convex Functions
	
	\vspace{0.5em}
	\noindent\textbf{MSC Classification:} 46E30, 41A50, 41A52, 41A65
	
	\section{Introduction}
	Starting from the classical Lebesgue spaces $L^p$, two fundamental generalizations are Orlicz spaces $L^\Psi$ and variable exponent Lebesgue spaces $L^{p(\cdot)}$ (see, e.g., \cite{HarjulehtoHasto2019,MendezLang2019}). Both belong to the broader class of Musielak--Orlicz spaces, introduced by Musielak \cite{MusielakOrlicz1959,Musielak1983_monograph} as an extension of the classical theory of Orlicz spaces (see \cite{Krasnoselskii1961}). Their functional-analytic structure is naturally described within the theory of modular spaces \cite{Kozlowski1988}. By allowing the modular $\Phi(x,t)$ to depend on the spatial variable $x$, Musielak--Orlicz spaces provide a flexible framework for modeling nonstandard growth and heterogeneous regularity. Consequently, they have become an important tool in nonlinear analysis, the study of partial differential equations with nonstandard growth, and the calculus of variations.
	
	Best approximation problems in classical $L^p$ spaces are well understood thanks to uniform convexity and reflexivity, which provide powerful variational tools, and in Orlicz spaces were extensively treated in \cite{Landers1980}. In the Musielak--Orlicz setting these global properties may fail: the spatial dependence of $\Phi$ can produce local piecewise affine or staircase behavior near the origin and may destroy the differentiability and convexity assumptions underlying many classical arguments, so the usual variational tools are no longer directly applicable. Understanding existence, variational characterization, and uniqueness of best modular approximants under minimal regularity on $\Phi$ is therefore both mathematically natural and a challenging problem with potential applications whenever nonuniform growth models arise.
	
	The purpose of this paper is to develop a general theory of best modular approximation in Musielak--Orlicz spaces under minimal regularity assumptions on the modular. Our approach neither requires differentiability of $\Phi$ with respect to its second variable nor excludes piecewise affine behavior near the origin. This level of generality allows us to cover important classes of Musielak--Orlicz spaces that fall outside the scope of the existing theory.
	
	Our main contributions are as follows.  We introduce a geometric classification of Orlicz functions near the origin (and extend it to the Musielak--Orlicz setting) to identify regions where the modular behaves like a weighted $L^1$ norm. We then establish an existence theorem for best modular approximants under mild measurability and lower semicontinuity hypotheses, with a corollary for functionals satisfying the $\Delta_2$ condition. Next, we derive a variational characterization of minimizers via lateral partial derivatives of $\Phi$ and a sign-coherence property for distinct minimizers. Finally, we obtain uniqueness criteria, including results based on strict convexity at the origin, a one-dimensional generalization of Haar's theorem, and results for $0$- and $1$-spaces including a necessary and sufficient condition for uniqueness.
	
	In the modular setting Kilmer, Koz{\l}owski, and Lewicki established foundational existence and characterization results \cite{Kilmer1990}. For classical Orlicz spaces, Benavente, Favier and Levis obtained explicit variational characterizations using lateral derivatives \cite{BenaventeFavierLevis2017}, and in \cite{BenaventeCostaPonceFavier2026} the present authors addressed uniqueness when strict convexity is absent by introducing the notions of $0$- and $1$-spaces. Kopaliani, Samashvili and Zviadadze \cite{Kopaliani2021} gave characterization and uniqueness theorems for polynomial approximation under a global differentiability assumption in the second variable; however, this excludes modulars with piecewise affine or staircase behavior near the origin, which we treat here without assuming differentiability of $\Phi$ with respect to its second variable.
	
	The remainder of the paper is organized as follows. Section~\ref{sec:prelim} collects notation and preliminary definitions. Section~\ref{sec:classification} introduces a novel geometric classification of Orlicz functions and applies it to partition the domain of Musielak--Orlicz functionals. Section~\ref{sec:existence} contains the existence theorems. Section~\ref{sec:characterization} provides the variational characterization of best approximants together with their sign-coherence property. Finally, Section~\ref{sec:uniqueness} treats uniqueness: strict convexity at the origin, a one-dimensional Haar generalization, and results for $0$- and $1$-spaces, including a necessary and sufficient condition.

	\section{Definitions and Notation} \label{sec:prelim}
	
	Let $\Omega \subset \mathbb{R}^d$ be a compact domain, $\mathcal{A}$ the $\sigma$-algebra of Lebesgue-measurable subsets of $\Omega$, and $\mu$ the Lebesgue measure. These assumptions will be maintained throughout. 
	
	\begin{definition}\label{orlicz function}
		A function $\Psi: [0, \infty) \to [0, \infty)$ is an \textbf{Orlicz function} if it is convex, with $\Psi(0) = 0$, $\Psi(t) > 0$ for $t > 0$, and $\lim_{t \to \infty} \Psi(t) = \infty$.
		
		For an Orlicz function $\Psi$, its right-hand derivative $\psi^{+}: [0, \infty) \to [0, \infty)$ is defined by $\psi^+(t) := \lim_{h \to 0^+} \frac{\Psi(t+h) - \Psi(t)}{h}$ for $t\ge 0$. 
	\end{definition}
	
	Observe that $\psi^{+}$ is non-decreasing, right-continuous and $\Psi(t) = \int_0^t \psi^+(s) ds$. 
	
	\begin{definition}\label{def:MO_function}
		A function $\Phi: \Omega \times [0, \infty) \to [0, \infty)$ is a \textbf{Musielak-Orlicz function} if the following two conditions hold 
		\begin{enumerate}
			\item[(i)] For each $t\ge0$, the map $x \mapsto \Phi(x, t)$ is $\mathcal{A}$-measurable and $\int_\Omega \Phi(x, t) d\mu(x) < \infty.$
			\item[(ii)] For a.e. $x \in \Omega$, the map $t \mapsto \Phi(x, t)$ is an Orlicz function.
		\end{enumerate}
	\end{definition}
	
	\begin{definition}[see \cite{MendezLang2019}, p. 110]\label{def:delta2}
		A function $\Phi: \Omega \times [0, \infty) \to [0, \infty)$ satisfies the \textbf{$\Delta_2$ condition} if there exist constants $C > 1$ and $T_0 > 0$ such that,
		$$\Phi(x, 2t) \le C\Phi(x, t) \quad \text{for all } t \ge T_0 \text{ and a.e. } x \in \Omega.$$
	\end{definition}
	
	\begin{definition}\label{def:MO_space}
		Let $\Phi$ be a Musielak-Orlicz function.
		Thus for any $\mathcal{A}$-measurable function $f$, we set $$\rho_\Phi(f) = \int_\Omega \Phi(x, |f(x)|) d\mu(x),$$ 
		for the  modular associated with $\Phi.$ 
		
		The \textbf{Musielak-Orlicz space} $L^\Phi(\Omega)$ is the linear space of all $\mathcal{A}$-measurable functions $f: \Omega \to \mathbb{R}$ such that	$$L^\Phi(\Omega) = \left\{ f : \Omega \to \mathbb{R} \text{ $\mathcal{A}$-measurable } \mid \rho_\Phi(\alpha f) < \infty, {\rm for\,\, some}\,\, \alpha > 0\right\}.$$
	\end{definition}
	
	Note that in the definition of $L^{\Phi} (\Omega)$ it can be consider $\alpha = 1$ if the Musielak-Orlicz function $\Phi$  satisfies the $\Delta_2-$condition. 
	
	Additionally, we represent by $C(\Omega)$ the set of all the real-valued continuous functions defined on $\Omega.$ 
	
	\begin{definition}
		Let $S \subset L^\infty(\Omega)$ be a finite-dimensional linear subspace. We refer to $S$ as the \textbf{approximating class}. 
	\end{definition}
	
	\begin{definition}\label{def:best_approximant}
		Let $f \in L^\Phi(\Omega)$. An element $P \in S$ is a \textbf{best modular approximant} to $f$ from $S$ if
		$$\rho_\Phi(f - P) = \inf_{Q \in S} \rho_\Phi(f - Q).$$
		The set of best modular approximants to $f$ from $S$ is denoted by $\mathcal{M}_\Phi(f/S)$.
	\end{definition}
	
	We denote the right and left partial derivatives of $\Phi$ with respect to the second variable as
	$$\varphi_+(x, t) = \frac{\partial^+ \Phi}{\partial t}(x, t) \quad \text{and} \quad \varphi_-(x, t) = \frac{\partial^- \Phi}{\partial t}(x, t),$$
	defined for $t>0$. 
	
	We denote the right-hand limit of $\varphi_+$ at the origin by $\varphi_0(x) = \lim_{t \to 0^+} \varphi_+(x, t)$.
	
	\section{Classification of Orlicz functions} \label{sec:classification}
	
	Following the geometric discussion first introduced in \cite[p. 46]{CostaPonce2024}, we classify Orlicz functions  (Definition \ref{orlicz function}) into three categories according to their behavior near the origin.

	\begin{definition}[Geometric Categories for Orlicz functions]\label{def:categories}
		Let $\Psi$ be an Orlicz function.
		\begin{enumerate}[label=(\roman*)]
			\item If the set $\{a > 0: \psi^{+}\text{ is strictly increasing on } [0, a]\}$ is non-empty, then $\Psi$ is \textbf{initially strict}.
			\item If the set $\{b > 0: \psi^{+}\text{ is constant on } [0, b]\}$ is non-empty, then $\Psi$ is \textbf{initially linear}.
			\item $\Psi$ is a \textbf{vanishing staircase} if there exists a strictly decreasing sequence of positive real numbers $\{c_n\}_{n \in \mathbb{N}}$ such that $\lim_{n \to \infty} c_n = \psi^{+}(0)$, for each $n$ the preimage $(\psi^{+})^{-1}(c_n)$ is a non-degenerate interval $I_n$ (i.e., an interval of positive length), and $\lim_{n \to \infty} \sup I_n = 0$.
		\end{enumerate}
	\end{definition}
	
	The following theorem formalizes the fact that these categories are exhaustive and mutually exclusive.
	
	\begin{theorem}\label{thm:clases_orlicz}
		Every Orlicz function \(\Psi\) belongs to exactly one of the three geometric categories defined in Definition \ref{def:categories}.
	\end{theorem}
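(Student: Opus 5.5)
The plan is to prove mutual exclusivity and exhaustiveness separately. Both rely on two elementary facts about $\psi^+$: it is non-decreasing and right-continuous. Consequently, for every value $c$ the preimage $(\psi^+)^{-1}(c)$ is an interval, possibly empty or a single point.

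\emph{Mutual exclusivity.} (i) and (ii) cannot both hold: if $\psi^+$ is strictly increasing on $[0,a]$ and constant on $[0,b]$, then it is both on $[0,\min\{a,b\}]$, which is absurd. Now suppose $\Psi$ is a vanishing staircase with data $c_n, I_n$.
\begin{itemize}
\item Given any $a>0$, pick $n$ with $\sup I_n<a$. Then $I_n\subset[0,a]$ is a non-degenerate interval on which $\psi^+$ is constant, so $\psi^+$ is not strictly increasing on $[0,a]$. Hence (iii) excludes (i).
\item Since $(c_n)$ is strictly decreasing with limit $\psi^+(0)$, we have $c_n>\psi^+(0)$ for every $n$. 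If $\psi^+$ were constant on some $[0,b]$, then for $n$ large $I_n\subset[0,b]$, which gives $c_n=\psi^+(0)$, a contradiction. Hence (iii) excludes (ii).
\end{itemize}

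\emph{Exhaustiveness.} Assume $\Psi$ is neither initially strict nor initially linear; we build the staircase inductively.

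Step 1 (a flat piece below any level). Fix $a>0$. Since $\psi^+$ is not strictly increasing on $[0,a]$, there are $s<t$ in $[0,a]$ with $\psi^+(s)=\psi^+(t)=:c$. By monotonicity $\psi^+$ is constant on $[s,t]$, so $I:=(\psi^+)^{-1}(c)$ is a non-degenerate interval with $\inf I\le s<a$.

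Step 2 ($c$ exceeds $\psi^+(0)$ and the flat piece starts away from $0$). Suppose $\inf I=0$. Then $\psi^+\equiv c$ on $(0,t)$, and right-continuity at $0$ gives $\psi^+(0)=c$. So $\psi^+$ is constant on $[0,t]$, contradicting the assumption that $\Psi$ is not initially linear. Hence $\inf I>0$ and $c>\psi^+(0)$.

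Step 3 (iteration). Start with $a_1=1$ and obtain $I_1$, $c_1$ from Steps 1 and 2. Given $I_n$, $c_n$, choose
\[
0<a_{n+1}<\min\{\inf I_n,\,1/(n+1)\}.
\]
Apply Steps 1 and 2 at level $a_{n+1}$ to get $I_{n+1}$ and $c_{n+1}$.
\begin{itemize}
\item The flat piece found lies in $[0,a_{n+1}]$ and $a_{n+1}<\inf I_n$, so $c_{n+1}\le\psi^+(a_{n+1})<c_n$. Thus $(c_n)$ is strictly decreasing.
\item Because $c_{n+1}<c_n$ and $\psi^+$ is monotone, $I_{n+1}$ lies to the left of $I_n$. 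In particular $\sup I_{n+1}\le\inf I_n<a_n\le 1/n$, so $\sup I_n\to0$.
\item Finally, pick $x_n\in I_n$. Then $x_n\to0$, so right-continuity gives $c_n=\psi^+(x_n)\to\psi^+(0)$.
\end{itemize}
This is exactly Definition \ref{def:categories}(iii).

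\emph{Main obstacle.} The delicate point is that a flat piece found inside $[0,a]$ may belong to a preimage $(\psi^+)^{-1}(c)$ that extends well beyond $a$. We cannot control $\sup I_n$ directly. Instead, each new level $a_{n+1}$ is chosen strictly below $\inf I_n$, and this forces $\sup I_{n+1}\le\inf I_n$.
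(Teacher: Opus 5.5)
Your proof is correct. It uses the same inductive scheme as the paper: find a flat piece of $\psi^+$ below the current level, use the failure of initial linearity plus right-continuity to get $\inf I_n>0$, then descend strictly below $\inf I_n$. It differs at the step that actually matters, which is forcing $\sup I_n\to 0$. The paper sets $\epsilon_{n+1}=\inf I_n$ with no further constraint. It then argues that $L=\lim_n \inf I_n$ must be $0$, since otherwise $\psi^+$ would be strictly increasing on $[0,L]$. That inference does not follow. The flat pieces are chosen arbitrarily inside $[0,\epsilon_n]$, so they may accumulate at some $L>0$ while $\psi^+$ still has flat pieces inside $[0,L]$; this happens, for instance, if $\psi^+$ also has steps accumulating at $1/2$ from the right. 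Your additional cap $a_{n+1}<1/(n+1)$ gives $\sup I_{n+1}\le\inf I_n<a_n\le 1/n$ directly, so no limit argument is needed. Your remark that the preimage of a flat value may extend beyond the current level also corrects the paper's claim $I_n\subset(0,\epsilon_n]$, which can fail at $n=1$. You also check explicitly that (iii) excludes (i) and (ii), which the paper only asserts. The one detail you leave implicit is that the $c_n$ are positive, which follows from $c_n>\psi^+(0)\ge 0$.
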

	
	\begin{proof}
		It is clear that categories (i) and (ii) are mutually exclusive. 
		
		Suppose that $\Psi$ is neither initially strict nor initially linear. Therefore, on any interval $[0, \epsilon]$ with $\epsilon > 0$, $\psi^+$ is neither strictly increasing nor constant.
		
		We will show constructively that $\Psi$ is a vanishing staircase. 
		
		Let $\epsilon_1 = 1$. For $n \ge 1$:\begin{itemize}
			\item Since $\psi^+$ is non-decreasing, there must be a constant $c_n$ such that the preimage $ (\psi^+)^{-1}(c_n)$ is a non-degenerate interval, which we name $I_n$. Because $\Psi$ is not initially linear, $\inf I_n > 0$. Furthermore, $I_n$ is a proper subset of $(0, \epsilon_n]$.
			\item We set $\epsilon_{n+1} := \inf I_n$.
		\end{itemize}
		
		This process yields a sequence of non-degenerate intervals $\{I_n\}$ such that $\sup I_{n+1} \le \epsilon_{n+1} = \inf I_n < \sup I_n$. Thus, $c_{n+1} < c_n$. 
		
		The sequence $\{\inf I_n\}$ is strictly decreasing and bounded below by $0$, so it must converge to some limit $L \ge 0$. Suppose $L > 0$. Then $\psi^+$ is strictly increasing on $[0,L]$, thus  $\Psi$ is initially strict, which contradicts our assumption. Hence, $L = 0$. It follows that $\lim_{n \to \infty} \sup I_n = 0$.
		
		Finally, since $\psi^+$ is right-continuous, $c_n = \psi^+(\sup I_n)$. Taking the limit as $n \to \infty$ yields $\lim_{n \to \infty} c_n = \psi^+(0)$.
		
		This confirms that $\Psi$ is a vanishing staircase. Therefore, the three categories are mutually exclusive and exhaustive.
	\end{proof}
	
	\begin{example}[A vanishing staircase Orlicz function]
		For each $n \in \mathbb{N}$, define the interval $I_n = \left[ \frac{1}{n+1}, \frac{1}{n} \right)$ for $n \ge 2$ and $I_1=\left[\tfrac12,1\right]$ . Let $\psi^{+}: [0, \infty) \to [0, \infty)$ be defined as:
		\[
		\psi^{+}(t) = \begin{cases} 
			0 & \text{if } t = 0, \\
			\frac{1}{n} & \text{if } t \in I_n \text{ for some } n \in \mathbb{N}, \\
			t & \text{if } t > 1.
		\end{cases}
		\]
		The function $\psi^{+}$ is non-decreasing and right-continuous. The associated Orlicz function is $\Psi(t) = \int_0^t \psi^{+}(s)  ds$. For any $t \in I_n$, the graph of $\Psi$ consists of a straight line segment with slope $1/n$. Since the sequence of constant values $c_n = 1/n$ strictly decreases to $0$ and the preimages $I_n$ are non-degenerate intervals accumulating at $0$, $\Psi$ is neither strictly convex nor purely linear starting near zero, and exhibits the vanishing staircase behavior described in category (iii).
	\end{example}
	
	Motivated by the phenomena we just described, we extend these insights to the generalized Musielak-Orlicz context. To the best knowledge of the authors, this approach is novel. 
	
	\begin{definition}[Domain Partition]\label{def:spatial_partition}
		Let $\Phi$ be a Musielak-Orlicz function. We partition $\Omega$ into the following disjoint subsets,
		\begin{itemize}
			\item \textbf{$\Omega_I$:} The set of points where $\Phi(x, \cdot)$ is initially strict.
			\[ \Omega_I := \bigcup_{n \in \mathbb{N}}  \bigcap_{\substack{q_1, q_2 \in \mathbb{Q} \\ 0 < q_1 < q_2 < 1/n}} \{ x \in \Omega : \varphi_+(x, q_1) < \varphi_+(x, q_2) \}. \]
			
			\item \textbf{$\Omega_{II}$:} The set of points where $\Phi(x, \cdot)$ is initially linear.
			\[ \Omega_{II} := \bigcup_{n \in \mathbb{N}} \{ x \in \Omega : \varphi_+(x, 1/n) = \varphi_0(x) \}. \]
			
			\item \textbf{$\Omega_{III}$:} The complement set, $\Omega_{III} := \Omega \setminus (\Omega_I \cup \Omega_{II})$. 
		\end{itemize}
	\end{definition}
	
	An interesting result that arises from the previous partition is the existence of a measurable function that acts as threshold of strict convexity, properly characterized in the following result, to which we will refer later on.
	
	\begin{proposition}\label{prop:equiv_caso_I}
		Let $\Phi$ be a Musielak--Orlicz function. The following are equivalent:
		\begin{enumerate}[label=(\roman*)]
			\item $\mu(\Omega \setminus \Omega_I) = 0$.
			\item There exists an $\mathcal{A}$-measurable function $\beta:\Omega\to(0,\infty)$ 
			such that $\Phi(x, \cdot)$ is strictly convex on $(0, \beta(x))$ 
			for a.e. $x \in \Omega$.
		\end{enumerate}
	\end{proposition}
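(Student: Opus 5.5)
The plan is to define the threshold explicitly from the countable description of $\Omega_I$ in Definition~\ref{def:spatial_partition}, and to use the standard fact that a convex function is strictly convex on an interval exactly when its right derivative is strictly increasing there. For $x\in\Omega$ set
\[
N(x):=\min\Bigl\{n\in\mathbb{N}:\ \varphi_+(x,q_1)<\varphi_+(x,q_2)\ \text{for all } q_1,q_2\in\mathbb{Q},\ 0<q_1<q_2<1/n\Bigr\},
\]
with $N(x)=\infty$ if no such $n$ exists. We then take $\beta(x):=1/N(x)$ on $\Omega_I$ and $\beta(x):=1$ on $\Omega\setminus\Omega_I$.

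First we verify measurability. Each $\varphi_+(\cdot,q)$ is $\mathcal{A}$-measurable, since for a.e.\ $x$ it equals $\lim_{k\to\infty} k\bigl(\Phi(x,q+1/k)-\Phi(x,q)\bigr)$, a pointwise limit of measurable functions by Definition~\ref{def:MO_function}(i). Put
\[
E_n:=\bigcap_{\substack{q_1,q_2\in\mathbb{Q}\\ 0<q_1<q_2<1/n}}\{\varphi_+(\cdot,q_1)<\varphi_+(\cdot,q_2)\}.
\]
This is a countable intersection, so $E_n\in\mathcal{A}$. The sets $E_n$ increase in $n$, and $\Omega_I=\bigcup_n E_n$. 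Hence $\{\beta=1/n\}=E_n\setminus E_{n-1}$ is measurable, and $\beta$ is a measurable simple-type function with values in $(0,1]$.

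\textbf{(i)$\Rightarrow$(ii).} Fix $x\in\Omega_I$ outside the null set where Definition~\ref{def:MO_function}(ii) fails. Then $\varphi_+(x,\cdot)$ is strictly increasing on the rationals of $(0,\beta(x))$. We upgrade this to all reals $0<s<t<\beta(x)$ by picking rationals $s<q_1<q_2<t$. Monotonicity of the right derivative gives
\[
\varphi_+(x,s)\le\varphi_+(x,q_1)<\varphi_+(x,q_2)\le\varphi_+(x,t).
\]
So $\varphi_+(x,\cdot)$ is strictly increasing on $(0,\beta(x))$. Since $\Phi(x,t)-\Phi(x,s)=\int_s^t\varphi_+(x,r)\,dr$, a strictly increasing derivative rules out any affine piece, which gives strict convexity on $(0,\beta(x))$. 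Because $\mu(\Omega\setminus\Omega_I)=0$, this holds for a.e.\ $x$.

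\textbf{(ii)$\Rightarrow$(i).} Let $\beta$ be as in (ii), and take $x$ in the full-measure set where $\Phi(x,\cdot)$ is strictly convex on $(0,\beta(x))$. Choose $n$ with $1/n<\beta(x)$; this uses only $\beta(x)>0$, so measurability of $\beta$ is not even needed here. If $0<q_1<q_2<1/n$ and $\varphi_+(x,q_1)=\varphi_+(x,q_2)$, then $\varphi_+$ is constant on $[q_1,q_2]$, so $\Phi(x,\cdot)$ is affine there, contradicting strict convexity. Hence $x\in E_n\subset\Omega_I$, and $\mu(\Omega\setminus\Omega_I)=0$.

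The only delicate points are the rational-to-real upgrade and the equivalence between strict convexity and a strictly increasing right derivative. Both are handled by monotonicity of $\varphi_+$ and the integral representation of $\Phi(x,\cdot)$, so no step should be a serious obstacle. The main care needed is that the null exceptional set from Definition~\ref{def:MO_function}(ii) is excluded throughout.
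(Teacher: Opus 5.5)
Your proof is correct and follows the paper's argument. Both use the countable rational description of $\Omega_I$, upgrade strict increase of $\varphi_+$ from rationals to reals by monotonicity, and observe that equal values of $\varphi_+$ force an affine piece, which contradicts strict convexity. The only difference is your choice $\beta=1/N(x)$ (set to $1$ off $\Omega_I$) in place of the paper's maximal threshold $\sup\{\delta>0:\Phi(x,\cdot)\text{ strictly convex on }(0,\delta)\}$; yours is slightly cleaner, since it is automatically finite and defined on all of $\Omega$, whereas the paper's supremum can equal $+\infty$ (e.g.\ for $\Phi(x,t)=t^2$).
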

	
	\begin{proof}
		\textit{(i) $\Rightarrow$ (ii).} Suppose $\mu(\Omega \setminus \Omega_I) = 0$.
		For a.e. $x \in \Omega$ we have $x \in \Omega_I$, so there exists 
		$n(x) \in \mathbb{N}$ such that $\varphi_+(x, q_1) < \varphi_+(x, q_2)$ 
		for all $q_1, q_2 \in \mathbb{Q}$ with $0 < q_1 < q_2 < 1/n(x)$. Since 
		$\varphi_+(x, \cdot)$ is right-continuous and non-decreasing, being strictly 
		increasing on the dense set $\mathbb{Q} \cap (0, 1/n(x))$ implies it is 
		strictly increasing on all of $(0, 1/n(x))$, and therefore $\Phi(x, \cdot)$ 
		is strictly convex on $(0, 1/n(x))$. Define
		\begin{equation*}
			\beta(x) = \sup \left\{ \delta > 0 : \Phi(x, \cdot) 
			\text{ is strictly convex on } (0, \delta) \right\},
		\end{equation*}
		so that $\Phi(x,\cdot)$ is strictly convex on $(0,\beta(x))$ and 
		$\beta(x) \geq 1/n(x) > 0$ for a.e. $x\in\Omega$. To verify measurability, 
		note that the same argument shows that for any $\alpha > 0$, $\Phi(x,\cdot)$ 
		is strictly convex on $(0,\alpha)$ if and only if $\varphi_+(x,q_1) < 
		\varphi_+(x,q_2)$ for all $q_1, q_2 \in \mathbb{Q}$ with $0 < q_1 < q_2 
		< \alpha$, so that
		\begin{equation*}
			\{ x \in \Omega : \beta(x) \geq \alpha \} 
			= \bigcap_{\substack{q_1, q_2 \in \mathbb{Q} \\ 0 < q_1 < q_2 < \alpha}} 
			\{ x \in \Omega : \varphi_+(x, q_1) < \varphi_+(x, q_2) \}.
		\end{equation*}
		Since each set $\{ x \in \Omega : \varphi_+(x, q_1) < \varphi_+(x, q_2) \}$ 
		is $\mathcal{A}$-measurable by the measurability of $\varphi_+(x,t)$ in $x$ for each fixed 
		$t$, the set $\{ x \in \Omega : \beta(x) \geq \alpha \}$ is a countable 
		intersection of $\mathcal{A}$-measurable sets, hence it is $\mathcal{A}$-measurable. Since $\alpha > 0$ was 
		arbitrary, $\beta$ is an $\mathcal{A}$-measurable function.
		
		\textit{(ii) $\Rightarrow$ (i).} Suppose there exists an $\mathcal{A}$-measurable function 
		$\beta:\Omega\to(0,\infty)$ such that $\Phi(x,\cdot)$ is strictly convex on 
		$(0,\beta(x))$ for a.e. $x\in\Omega$. Then $\varphi_+(x,\cdot)$ is strictly 
		increasing on $(0,\beta(x))$ for a.e. $x\in\Omega$. For a.e. $x\in\Omega$, 
		choose $n(x)\in\mathbb{N}$ such that $1/n(x) < \beta(x)$. Then 
		$\varphi_+(x,q_1) < \varphi_+(x,q_2)$ for all $q_1,q_2\in\mathbb{Q}$ with 
		$0 < q_1 < q_2 < 1/n(x)$, which means precisely that $x \in \Omega_I$. 
		Hence $x \in \Omega_I$ for a.e. $x \in \Omega$, that is, 
		$\mu(\Omega \setminus \Omega_I) = 0$.
	\end{proof}
	
	\begin{remark}	
		By construction, $x \in \Omega_{III}$ if and only if $\varphi_+(x, \cdot)$ exhibits a vanishing staircase behavior near the origin. That is, for each $x \in \Omega_{III}$, there exists a strictly decreasing sequence of positive real numbers $\{c_n(x)\}_{n \in \mathbb{N}}$ with $\lim_{n \to \infty} c_n(x) = \varphi_0(x)$, such that for each $n$, the preimage $I_n(x) = \{t > 0 : \varphi_+(x, t) = c_n(x)\}$ is a non-degenerate interval, and $\lim_{n \to \infty} \sup I_n(x) = 0$.
	\end{remark}
	
	\begin{remark}
		The sets $\Omega_I$, $\Omega_{II}$, and $\Omega_{III}$ are $\mathcal{A}$-measurable, as they are formed by countable unions and intersections of $\mathcal{A}$-measurable sets.
	\end{remark}
	
	\section{Existence} \label{sec:existence}
	
	Regarding the existence of best modular approximants from finite-dimensional linear subspaces, we further develop the results in \cite{BenaventeFavierLevis2017}. The following theorem imposes fairly weak conditions that guarantee the existence of a best approximation.
	
	\begin{theorem}\label{thm:existencia_general}
		Let \(S\subset L^\infty(\Omega)\) be a finite-dimensional linear subspace and let
		\(\Phi:\Omega\times[0,\infty)\to[0,\infty)\) that satisfies the following conditions:
		\begin{enumerate}[label=(\roman*)]
			\item for every \(t\ge0\), the map \(x\mapsto\Phi(x,t)\) is \(\mathcal A\)-measurable;
			\item for almost every \(x\in\Omega\), the map \(t\mapsto\Phi(x,t)\) is nondecreasing, left-continuous, \(\Phi(x,0)=0\) and  \(\lim_{t\to\infty}\Phi(x,t)=\infty\).
		\end{enumerate}
		Let \(f:\Omega\to\mathbb R\) be \(\mathcal A\)-measurable with \(\int_\Omega\Phi(x,|f(x)|)d\mu(x)<\infty\). Then there exists \(P\in S\) such that
		\[
		\int_\Omega\Phi(x,|f(x)-P(x)|)d\mu(x)=\inf_{Q\in S}\int_\Omega\Phi(x,|f(x)-Q(x)|)d\mu(x).
		\]
	\end{theorem}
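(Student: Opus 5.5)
We use the direct method on a minimizing sequence in the finite-dimensional space $S$. Write $J(Q)=\int_\Omega\Phi(x,|f(x)-Q(x)|)\,d\mu(x)$ and $m=\inf_{Q\in S}J(Q)$. Since $0\in S$, we have $0\le m\le J(0)<\infty$. Fix a minimizing sequence $\{Q_k\}\subset S$ with $J(Q_k)\to m$; we may assume $J(Q_k)\le J(0)+1$ for all $k$. The plan has three steps: show $\{Q_k\}$ is bounded in $S$, extract a convergent subsequence $Q_k\to P$ in $S$, and pass to the limit using Fatou's lemma together with a lower semicontinuity property of $\Phi(x,\cdot)$.

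\textbf{Step 1: boundedness, the main obstacle.} Choose a basis $e_1,\dots,e_n$ of $S$ and write $Q_k=\sum_i a^{(k)}_i e_i$. Suppose $\|a^{(k)}\|\to\infty$ along a subsequence. Normalize to $b^{(k)}=a^{(k)}/\|a^{(k)}\|$ and pass to a further subsequence with $b^{(k)}\to b$, $\|b\|=1$. Put $R=\sum_i b_i e_i\neq 0$ as an element of $L^\infty$, by linear independence of the $e_i$ in $L^\infty$. Then the set $E=\{x: R(x)\neq0\}$ has positive measure.

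For a.e. $x\in E$ we have $Q_k(x)=\|a^{(k)}\|\big(R(x)+o(1)\big)$, so $|f(x)-Q_k(x)|\to\infty$. Since $\lim_{t\to\infty}\Phi(x,t)=\infty$ a.e., Fatou's lemma gives
\[
\liminf_k J(Q_k)\ \ge\ \int_E \liminf_k \Phi(x,|f(x)-Q_k(x)|)\,d\mu = \infty.
\]
This contradicts $J(Q_k)\le J(0)+1$. Hence $\{a^{(k)}\}$ is bounded.

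Two technical points need care here. First, the pointwise convergence $\sum b^{(k)}_i e_i(x)\to R(x)$ holds off a null set, because each $e_i$ is finite a.e. Second, $\Phi(x,\cdot)$ is nondecreasing, so $\Phi(x,t_k)\to\infty$ whenever $t_k\to\infty$. Measurability of $x\mapsto\Phi(x,|g(x)|)$ for measurable $g$ also has to be checked. We obtain it by approximating $|g|$ from below by simple functions $g_j\uparrow|g|$ and using left-continuity together with monotonicity: $\Phi(x,g_j(x))\uparrow\Phi(x,|g(x)|)$, and each $\Phi(x,g_j(x))$ is measurable by hypothesis (i). So the composite is a pointwise limit of measurable functions.

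\textbf{Steps 2 and 3: compactness and lower semicontinuity.} By boundedness, a subsequence has $a^{(k)}\to a$; set $P=\sum_i a_ie_i\in S$. Then $Q_k(x)\to P(x)$ for a.e. $x$, so $t_k:=|f(x)-Q_k(x)|\to t:=|f(x)-P(x)|$. The difficulty is that $\Phi(x,\cdot)$ is only left-continuous, so it is lower semicontinuous only from the left, and $t_k$ may approach $t$ from above. Monotonicity resolves this. For $s_k\to t$, any $t'<t$ satisfies $s_k>t'$ eventually, so $\Phi(x,s_k)\ge\Phi(x,t')$ eventually. Letting $t'\uparrow t$ and using left-continuity gives $\liminf_k\Phi(x,s_k)\ge\Phi(x,t)$. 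In the case $t=0$ this is trivial because $\Phi(x,0)=0$.

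Fatou's lemma then yields $J(P)\le\liminf_k J(Q_k)=m$. Since $P\in S$ we also have $J(P)\ge m$, so $J(P)=m$ and $P$ is a best approximant.
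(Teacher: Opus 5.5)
Your proof is correct. Its architecture matches the paper's: a minimizing sequence, boundedness by contradiction after normalizing, compactness in the finite-dimensional space $S$, and then Fatou's lemma with lower semicontinuity. The difference is in how unboundedness is ruled out.

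The paper normalizes in $L^\infty$ and introduces the level sets $A_k(\lambda)=\{x:|f(x)-Q_k(x)|>\lambda\|Q_k\|_{L^\infty(\Omega)}\}$. Using convergence in measure and an a.e.\ convergent subsequence, it shows that $\mu(A_k(\lambda))>\alpha$ along a subsequence for some $\lambda,\alpha>0$. It then applies Egoroff's theorem to make $\Phi(x,t)\to\infty$ uniform on a set of measure greater than $\mu(\Omega)-\alpha/2$, which yields the quantitative bound $J(Q_k)\ge M\alpha/2$.

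You instead note that $|f-Q_k|\to\infty$ pointwise a.e.\ on $\{R\neq 0\}$, a set of positive measure, and apply Fatou's lemma there directly. This is shorter and avoids Egoroff, so it needs no finiteness of $\mu(\Omega)$ in this step. It gives only qualitative divergence, but that is all the argument requires.

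You also make explicit two points the paper leaves implicit:
\begin{enumerate}
\item The measurability of $x\mapsto\Phi(x,|g(x)|)$, which does not follow from hypothesis (i) alone. You obtain it correctly by approximating $|g|$ from below by simple functions and using monotonicity and left-continuity.
\item The sequential lower semicontinuity of $\Phi(x,\cdot)$ when $t_k$ approaches $t$ from either side. The paper simply asserts this.
\end{enumerate}
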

	
	\begin{proof}
		We first observe that the zero function $0 \in S$, since $S$ is a linear subspace. Thus, $$\inf_{Q \in S} \int_\Omega \Phi(x, \vert{}f(x) - Q(x)\vert{})  d\mu(x) \le \int_\Omega \Phi(x, \vert{}f(x) - 0\vert{})  d\mu(x) = \int_\Omega \Phi(x, \vert{}f(x)\vert{})  d\mu(x) < \infty.$$
		
		Let $\{Q_k\} \subset S$ be a sequence such that $$\int_\Omega \Phi(x, |f(x) - Q_k(x)|)  d\mu(x) \le \inf_{Q \in S} \int_\Omega \Phi(x, |f(x) - Q(x)|)  d\mu(x) + 1/k.$$ Let us set the positive constant $K:=\inf_{Q \in S} \int_\Omega \Phi(x, |f(x) - Q(x)|)  d\mu(x) + 1$. Then $\int_\Omega \Phi(x, |f(x) - Q_k(x)|)  d\mu(x) \le K$ for all $k \in \mathbb{N}$.
		
		We claim that the sequence $\{\|Q_k\|_{L^\infty(\Omega)}\}$ is bounded. Suppose, for a contradiction, that there exists a subsequence, still denoted by $\{Q_k\}$, such that $\|Q_k\|_{L^\infty(\Omega)} \to \infty$ and $Q_k \neq 0$ for all $k$.
		Since the unit sphere $\{Q \in S : \|Q\|_{L^\infty(\Omega)} = 1\}$ is compact in the finite-dimensional subspace $S$, there exists $R \in S$ with $\|R\|_{L^\infty(\Omega)} = 1$ such that $Q_k / \|Q_k\|_{L^\infty(\Omega)}$ converges uniformly to $R$ a.e. in $\Omega$.
		
		Define the sets $$A_k(\lambda) := \{x \in \Omega : |f(x) - Q_k(x)| > \lambda \|Q_k\|_{L^\infty(\Omega)}\}.$$
		We claim that there exist $\lambda > 0$, $\alpha > 0$, and a subsequence (which we continue to denote by $\{Q_k\}$) such that $\mu(A_k(\lambda)) > \alpha$ for all $k$. 
		
		Indeed, if this were false, then for all $\lambda > 0$, $\mu(A_k(\lambda)) \to 0$, implying that the sequence $|f| / \|Q_k\|_{L^\infty(\Omega)} - |Q_k| / \|Q_k\|_{L^\infty(\Omega)}$ converges to $0$ in measure.
		Extracting a subsequence such that $\vert{}f\vert{} / \Vert{}Q_k\Vert{}_{L^\infty(\Omega)} - \vert{}Q_k\vert{} / \Vert{}Q_k\Vert{}_{L^\infty(\Omega)} \to 0$ a.e. in $\Omega$, and observing that $f / \Vert{}Q_k\Vert{}_{L^\infty(\Omega)} \to 0$ a.e. since $f$ is finite and $\Vert{}Q_k\Vert{}_{L^\infty(\Omega)} \to \infty$, which forces $Q_k / \Vert{}Q_k\Vert{}_{L^\infty(\Omega)} \to 0$ a.e. in $\Omega$. By the uniqueness of the limit, $R=0$ a.e. in $\Omega$, which contradicts the fact that $\Vert{}R\Vert{}_{L^\infty(\Omega)} = 1$.
		
		Since $\lim_{t \to \infty} \Phi(x, t) = \infty$ for almost every $x \in \Omega$, by virtue of Egoroff's theorem (see \cite[p. 62]{Folland1999}), for any $\epsilon > 0$ there exists an $\mathcal{A}$-measurable set $E \subset \Omega$ with $\mu(\Omega \setminus E) < \epsilon$ such that $\lim_{t \to \infty} \Phi(x, t) = \infty$ uniformly on $E$.
		We choose $\epsilon = \alpha / 2$. Thus, for any arbitrarily large $M > 0$, there exists $T > 0$ such that $\Phi(x, t) \ge M$ for all $x \in E$ and all $t \ge T$.
		For sufficiently large $k$, we have $\lambda \|Q_k\|_{L^\infty(\Omega)} \ge T$. Consider the intersection $A_k(\lambda) \cap E$.
		Its measure satisfies $$\mu(A_k(\lambda) \cap E) \ge \mu(A_k(\lambda)) - \mu(\Omega \setminus E) > \alpha / 2.$$
		Then we obtain
		\[ \int_\Omega \Phi(x, |f(x) - Q_k(x)|)  d\mu(x) \ge \int_{A_k(\lambda) \cap E} \Phi(x, \lambda \|Q_k\|_{L^\infty(\Omega)})  d\mu(x) \ge M \frac{\alpha}{2}. \]
		Since $M$ can be chosen arbitrarily large, $\int_\Omega \Phi(x, |f(x) - Q_k(x)|)  d\mu(x) \to \infty$ as $k \to \infty$.
		This contradicts the uniform bound $K$.
		Consequently, the sequence $\{Q_k\}$ must be bounded in the $L^\infty(\Omega)$ norm, so it admits a subsequence that we also name $\{Q_k\}$ which converges uniformly a.e. in $\Omega$ to an element $P \in S$.
		This implies point-wise convergence almost everywhere on $\Omega$. Applying Fatou's Lemma and utilizing the lower semi-continuity of $\Phi(x, \cdot)$ (which follows from being non-decreasing and left-continuous), we conclude
		\begin{align*}
			\int_\Omega \Phi(x, |f(x) - P(x)|)  d\mu(x) &\leq \int_\Omega \liminf_{k \to \infty} \Phi(x, |f(x) - Q_k(x)|)  d\mu(x)  \\
			&\leq \liminf_{k \to \infty} \int_\Omega \Phi(x, |f(x) - Q_k(x)|)  d\mu(x) \\ &\leq \inf_{Q \in S} \int_\Omega \Phi(x, |f(x) - Q(x)|)  d\mu(x) \\
		\end{align*}
		This confirms that $ \int_\Omega \Phi(x, |f(x) - P(x)|)  d\mu(x) = \inf_{Q \in S} \int_\Omega \Phi(x, |f(x) - Q(x)|)  d\mu(x).$
	\end{proof}
	
	The next Corollary is a direct consequence of the previous theorem. 
	
	\begin{corollary}\label{cor:existence}
		Let \(\Phi\) be a Musielak--Orlicz function  and let \(S\subset L^\infty(\Omega)\) be a finite-dimensional linear subspace. Then for every \(f\in L^\Phi(\Omega)\) the set \(\mathcal M_\Phi(f/S)\) is nonempty.
	\end{corollary}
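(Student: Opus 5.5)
The plan is to reduce Corollary~\ref{cor:existence} to Theorem~\ref{thm:existencia_general}. There is one subtlety: membership $f\in L^\Phi(\Omega)$ only gives $\rho_\Phi(\alpha f)<\infty$ for \emph{some} $\alpha>0$, not necessarily for $\alpha=1$. The theorem, however, requires $\int_\Omega\Phi(x,|f(x)|)\,d\mu(x)<\infty$. I will handle this by a case split on whether the infimum is finite, followed by a translation inside $S$.

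First I check that a Musielak--Orlicz function satisfies hypotheses (i) and (ii) of Theorem~\ref{thm:existencia_general}.
\begin{itemize}
\item Condition (i) is exactly Definition~\ref{def:MO_function}(i).
\item For a.e.\ $x$, $\Phi(x,\cdot)$ is an Orlicz function. So $\Phi(x,0)=0$ and $\Phi(x,t)\to\infty$ as $t\to\infty$.
\item It is nondecreasing: it is convex, nonnegative and vanishes at $0$, so $\Phi(x,t)\le \frac{t}{s}\Phi(x,s)$ for $0<t<s$, which gives $\Phi(x,t)\le\Phi(x,s)$.
\item It is continuous, hence left-continuous: as a finite convex function it is continuous on $(0,\infty)$, and the same inequality gives $\Phi(x,t)\to0$ as $t\to0^+$.
\end{itemize}

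Next comes the case split. Let $m:=\inf_{Q\in S}\rho_\Phi(f-Q)$.

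If $m=\infty$, then $\rho_\Phi(f-P)=\infty=m$ for every $P\in S$. Hence $\mathcal M_\Phi(f/S)=S\neq\emptyset$, and this case is trivial.

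If $m<\infty$, choose $Q_0\in S$ with $\rho_\Phi(f-Q_0)<\infty$ and set $g:=f-Q_0$. The function $g$ is $\mathcal A$-measurable and satisfies $\int_\Omega\Phi(x,|g(x)|)\,d\mu(x)<\infty$. Theorem~\ref{thm:existencia_general} then yields $P'\in S$ with $\rho_\Phi(g-P')=\inf_{Q\in S}\rho_\Phi(g-Q)$. Since $S$ is a linear subspace, the map $Q\mapsto Q_0+Q$ is a bijection of $S$ onto itself. Therefore $\inf_{Q\in S}\rho_\Phi(g-Q)=m$, and $P:=Q_0+P'\in S$ satisfies $\rho_\Phi(f-P)=m$, i.e.\ $P\in\mathcal M_\Phi(f/S)$.

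The only genuine point requiring care is this gap between $f\in L^\Phi(\Omega)$ and the finiteness hypothesis $\rho_\Phi(f)<\infty$ in the theorem. The case split together with the translation by $Q_0$ is what resolves it; everything else is routine verification.
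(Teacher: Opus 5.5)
Your proposal is correct and follows essentially the same route as the paper: a case split on whether some $Q_0\in S$ gives $\rho_\Phi(f-Q_0)<\infty$ (if none does, $\mathcal M_\Phi(f/S)=S$), then Theorem~\ref{thm:existencia_general} applied to $f-Q_0$, and finally translation back within $S$. You also verify explicitly that a Musielak--Orlicz function is nondecreasing and left-continuous in $t$, which the paper leaves implicit, so your write-up is slightly more complete.
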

	
	\begin{proof} 
		First suppose there exists $P\in S$ such that $\int_\Omega \Phi(x, |f(x) - P(x)|)  d\mu(x) < \infty.$ Then $f-P$ satisfies the hypothesis of Theorem \ref{thm:existencia_general}, thus \(\mathcal M_\Phi(f-P/S)\)\ is nonempty, and then there exists $Q\in S$ such that
		$$\int_\Omega \Phi(x, |f(x) - P(x)-Q(x)|)  d\mu(x) \le \int_\Omega \Phi(x, |f(x) - P(x)-T(x)|)  d\mu(x),$$
		for all $T\in S.$ Thus $P+Q \in \mathcal M_\Phi(f/S).$
		
		On the other hand, if $$\int_\Omega \Phi(x, |f(x) - P(x)|)  d\mu(x) = \infty, $$
		for all $P\in S,$ we have $\mathcal M_\Phi(f-P/S) =S,$  
		and the proof is completed.
		
	\end{proof} 
	
	\section{Characterization} \label{sec:characterization}
	
	At this stage, our goal is to generalize the characterization results from \cite{BenaventeCostaPonceFavier2026}.

	\begin{theorem}[Characterization of the best approximant]\label{thm:characterization}
		Let \(\Phi\) be a Musielak--Orlicz function satisfying the \(\Delta_2\) condition and let \(S\subset L^\infty(\Omega)\) be a finite-dimensional linear subspace. For \(f\in L^\Phi(\Omega)\) and \(P\in S\), \(P\in\mathcal M_\Phi(f/S)\) if and only if for every \(Q\in S\) one has
		\begin{equation}\label{eq:ineq_characterization}
			\begin{split}
				&\quad \int_{\{(f-P)Q>0\}} \varphi_-(x,|f(x)-P(x)|)\operatorname{sgn}(f(x)-P(x))Q(x)d\mu(x) \\
				& +\int_{\{(f-P)Q<0\}} \varphi_+(x,|f(x)-P(x)|)\operatorname{sgn}(f(x)-P(x))Q(x)d\mu(x) \\
				&\le \int_{\{f=P\}} \varphi_+(x,0)|Q(x)|d\mu(x),
			\end{split}
		\end{equation}
		where \(f-P\) denotes the function \(x\mapsto f(x)-P(x)\) and \(\varphi_\pm\) are the appropriate lateral derivatives associated to \(\Phi\).
	\end{theorem}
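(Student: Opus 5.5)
The plan is to reduce everything to the one-sided directional derivative of the convex functional $F(\lambda)=\rho_\Phi(f-P-\lambda Q)$, $\lambda\in\mathbb R$, for fixed $Q\in S$. By the $\Delta_2$ condition, $\rho_\Phi(f-P-\lambda Q)<\infty$ for every $\lambda$. Indeed, $f\in L^\Phi$, $P,Q\in L^\infty$, and $\int_\Omega\Phi(x,t)\,d\mu<\infty$ for each $t$; iterating $\Phi(x,2t)\le C\Phi(x,t)$ above $T_0$ bounds $\Phi(x,|g|)$ by a multiple of $\Phi(x,|f|/\alpha)$ plus $\Phi(x,c)$ for a constant $c$. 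So $F$ is a finite convex function on $\mathbb R$. Then $P\in\mathcal M_\Phi(f/S)$ if and only if $F$ attains its minimum at $0$ for every $Q\in S$. This holds if and only if the right derivative $F'_+(0)\ge0$ for every $Q$. The left-derivative condition is the same condition applied to $-Q$, since $S$ is a linear space.

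The main step is to compute $F'_+(0)$ by differentiating under the integral. Pointwise, for $\lambda>0$ the difference quotient
\[
\frac{\Phi(x,|f-P-\lambda Q|)-\Phi(x,|f-P|)}{\lambda}
\]
is nondecreasing in $\lambda$, by convexity of $\lambda\mapsto\Phi(x,|g-\lambda Q|)$. It is bounded above by its value at $\lambda=1$, which is integrable. Monotone convergence, applied to the decreasing family as $\lambda\downarrow0$, therefore gives
\[
F'_+(0)=\int_\Omega \frac{d^+}{d\lambda}\Phi(x,|f-P-\lambda Q|)\Big|_{\lambda=0}\,d\mu .
\]
The limit is bounded below by the value at $\lambda=-1$, by monotonicity of the slopes, so it is finite.

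Next I compute the pointwise one-sided derivative by cases, writing $g=f-P$.
\begin{enumerate}
\item On $\{gQ>0\}$, $|g-\lambda Q|$ decreases in $\lambda$ near $0$ with derivative $-|Q|$, so the right derivative is $-\varphi_-(x,|g|)|Q|=-\varphi_-(x,|g|)\operatorname{sgn}(g)Q$.
\item On $\{gQ<0\}$, $|g-\lambda Q|$ increases, so the right derivative is $\varphi_+(x,|g|)|Q|=-\varphi_+(x,|g|)\operatorname{sgn}(g)Q$.
\item On $\{g=0\}$, the value is $\Phi(x,\lambda|Q|)$, giving $\varphi_+(x,0)|Q|$.
\item On $\{Q=0\}$, the value is $0$.
\end{enumerate}
Summing, the condition $F'_+(0)\ge0$ is exactly \eqref{eq:ineq_characterization}.

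The delicate point is the justification in the derivative step: the integrability bounds from $\Delta_2$, and the interpretation of $\varphi_+(x,0)$ as $\varphi_0(x)$ via right-continuity. Measurability of $\varphi_\pm(x,|g(x)|)$ also needs checking; it follows because each is a limit of difference quotients of Carathéodory-type functions along rational $h$. Once these are in place, both directions of the equivalence follow from convexity of $F$. Sufficiency holds because $F(1)-F(0)\ge F'_+(0)\ge0$ for all $Q$. Necessity holds because a negative $F'_+(0)$ would yield a $\lambda>0$ with $F(\lambda)<F(0)$.
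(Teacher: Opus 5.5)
Your proposal is correct and follows essentially the same route as the paper: convexity of $\lambda\mapsto\rho_\Phi(f-P-\lambda Q)$, computing its right derivative at $0$ by passing the limit under the integral, the pointwise case analysis on the sign of $(f-P)Q$ and on $\{f=P\}$, and then necessity from $F'_+(0)\ge 0$ and sufficiency from $F(1)-F(0)\ge F'_+(0)\ge 0$. The only real difference is how you justify the interchange. You sandwich the monotone convex difference quotients between their values at $\lambda=\pm1$, which needs only $\rho_\Phi(f-P\pm Q)<\infty$; the paper instead uses dominated convergence with the explicit majorant $|Q|\,\varphi_+(x,|f-P|+|Q|)$, made integrable via $t\varphi_+(x,t)\le\Phi(x,2t)$ and $\Delta_2$. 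Your split by the sign of $(f-P)Q$ is also somewhat cleaner than the paper's $\epsilon$-dependent sets $A_1,\dots,A_7$.
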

	
	\begin{proof}
		For $P \in \mathcal{M}_\Phi(f/S)$ and $Q \in S$, we define the functional $$F_Q(\epsilon) := \int_\Omega \Phi(x, |f(x) - (P(x) + \epsilon Q(x))|)  d\mu(x)$$ for $\epsilon \ge 0$.
		By the convexity of $\Phi(x, \cdot)$, $F_Q$ is a convex function on $[0, \infty)$. Indeed, for any $\epsilon_1, \epsilon_2 \ge 0$ and any $\alpha, \beta \ge 0$ such that $\alpha + \beta = 1$, we can rewrite,
		\[
		f(x) - (P(x) + (\alpha \epsilon_1 + \beta \epsilon_2)Q(x)) = \alpha(f(x) - P(x) - \epsilon_1 Q(x)) + \beta(f(x) - P(x) - \epsilon_2 Q(x)).
		\]
		Thus we have,
		\[
		|f(x) - (P(x) + (\alpha \epsilon_1 + \beta \epsilon_2)Q(x))| \le \alpha|f(x) - P(x) - \epsilon_1 Q(x)| + \beta|f(x) - P(x) - \epsilon_2 Q(x)|.
		\]
		Since $\Phi(x, \cdot)$ is non-decreasing and convex for a. e. $x \in \Omega$, it follows that:
		\begin{align*}
			& \Phi(x, |f(x) - (P(x) + (\alpha \epsilon_1 + \beta \epsilon_2)Q(x))|) \\
			&\le \Phi(x, \alpha|f(x) - P(x) - \epsilon_1 Q(x)| + \beta|f(x) - P(x) - \epsilon_2 Q(x)|) \\
			&\le \alpha \Phi(x, |f(x) - P(x) - \epsilon_1 Q(x)|) + \beta \Phi(x, |f(x) - P(x) - \epsilon_2 Q(x)|).
		\end{align*}
		Integrating this inequality over $\Omega$ with respect to $\mu$ yields the convexity of the functional:
		\[
		F_Q(\alpha \epsilon_1 + \beta \epsilon_2) \le \alpha F_Q(\epsilon_1) + \beta F_Q(\epsilon_2).
		\]
		
		Since $P\in\mathcal{M}_\Phi(f/S)$, $F_Q$ attains its minimum at $\epsilon = 0$, which implies $F_Q^+(0) \ge 0$. We compute this derivative as
		\begin{equation*}
			F_Q^+(0) = \lim_{\epsilon \to 0^+} \int_{\{x\in\Omega:Q(x) \ne 0\}} \frac{\Phi(x, |f(x) - (P(x) + \epsilon Q(x))|) - \Phi(x, |f(x) - P(x)|)}{\epsilon Q(x)} Q(x)  d\mu(x).
		\end{equation*}
		
		To justify the application of Lebesgue's Dominated Convergence Theorem, we must establish an integrable upper bound for
		\[
		\left| \frac{\Phi(x, |f(x) - P(x) - \epsilon Q(x)|) - \Phi(x, |f(x) - P(x)|)}{\epsilon} \right|.
		\]
		We observe that
		\[
		-\epsilon|Q(x)| \le |f(x) - P(x) - \epsilon Q(x)| - |f(x) - P(x)| \le \epsilon|Q(x)|.
		\]
		Because $\Phi(x, \cdot)$ is convex, for any $u, v \ge 0$, $\Phi(x, v) - \Phi(x, u) \le \varphi_+(x, v)(v - u)$ when $v \ge u$, and $\Phi(x, u) - \Phi(x, v) \le \varphi_+(x, u)(u - v)$ when $u > v$.
		In particular, we can set $u = |f(x) - P(x)|$ and $v = |f(x) - P(x) - \epsilon Q(x)|$. Therefore:
		\begin{align*}
			&|\Phi(x, |f(x) - P(x) - \epsilon Q(x)|) - \Phi(x, |f(x) - P(x)|)| \\
			& \le \epsilon|Q(x)| \max\{ \varphi_+(x, |f(x) - P(x) - \epsilon Q(x)|), \varphi_+(x, |f(x) - P(x)|) \}.
		\end{align*}
		Since $\varphi_+(x, \cdot)$ is non-decreasing and we consider limits as $\epsilon \to 0^+$, we can assume $0 < \epsilon \le 1$. Thus, $|f(x) - P(x) - \epsilon Q(x)| \le |f(x) - P(x)| + |Q(x)|$. Therefore,
		\begin{equation*}
			\textstyle
			\left| \frac{\Phi(x, |f(x) - P(x) - \epsilon Q(x)|) - \Phi(x, |f(x) - P(x)|)}{\epsilon} \right| \le |Q(x)| \varphi_+(x, |f(x) - P(x)| + |Q(x)|).
		\end{equation*}
		Since $Q \in L^\infty(\Omega)$, there exists a constant $M_Q > 0$ such that $|Q(x)| \le M_Q$ a. e. in $\Omega$.
		Let us define $g(x) := |f(x) - P(x)| + |Q(x)|$.
		Thus, it suffices to prove the integrability of $\varphi_+(x, g(x))$.
		
		By the convexity of $\Phi(x, \cdot)$, $t \varphi_+(x, t) \le \Phi(x, 2t)$. On the set $\{x \in \Omega : g(x) \ge 1\}$:
		\[
		\varphi_+(x, g(x)) \le g(x)\varphi_+(x, g(x)) \le \Phi(x, 2g(x)),
		\]
		which is integrable because the $\Delta_2$ condition guarantees that $L^\Phi(\Omega)$ is a linear space. 
		
		On the set $\{x \in \Omega : g(x) < 1\}$ we have
		\[
		\varphi_+(x, g(x)) \le \varphi_+(x, 1) \le \Phi(x, 2).
		\]
		The bound is integrable by definition of Musielak-Orlicz function.
		
		Therefore, $|Q(x)| \varphi_+(x, g(x))$ is integrable on $\Omega$, and Lebesgue's Dominated Convergence Theorem is applicable.
		
		We evaluate the pointwise limit of the quotient as $\epsilon \to 0^+$ and we partition the set $\{x \in \Omega : Q(x) \ne 0\}$ into the following disjoint subsets
		\begin{align*}
			A_1 &= \{x \in \Omega : Q(x) > 0\} \cap \{x \in \Omega : P(x) < P(x) + \epsilon Q(x) < f(x)\} \\
			A_2 &= \{x \in \Omega : Q(x) < 0\} \cap \{x \in \Omega : P(x) + \epsilon Q(x) < P(x) < f(x)\} \\
			A_3 &= \{x \in \Omega : Q(x) < 0\} \cap \{x \in \Omega : P(x) + \epsilon Q(x) < f(x) < P(x)\} \\
			A_4 &= \{x \in \Omega : Q(x) < 0\} \cap \{x \in \Omega : P(x) + \epsilon Q(x) < f(x) = P(x)\} \\
			A_5 &= \{x \in \Omega : Q(x) > 0\} \cap \{x \in \Omega : f(x) \le P(x) < P(x) + \epsilon Q(x)\} \\
			A_6 &= \{x \in \Omega : Q(x) > 0\} \cap \{x \in \Omega : P(x) < f(x) \le P(x) + \epsilon Q(x)\} \\
			A_7 &= \{x \in \Omega : Q(x) < 0\} \cap \{x \in \Omega : f(x) \le P(x) + \epsilon Q(x) < P(x)\}.
		\end{align*}
		
		Thus we have,
		\begin{align*} 
			F_Q^+(0) 
			&= \lim_{\epsilon \to 0^+} \sum_{i=1}^7 \int_{A_i} \frac{\Phi(x, \lvert f(x) - (P(x) + \epsilon Q(x)) \rvert) - \Phi(x, \lvert f(x) - P(x) \rvert)}{\epsilon Q(x)} Q(x) \, d\mu(x) \\
			&= \lim_{\epsilon \to 0^+} \sum_{i=1}^7 I_i. 
		\end{align*}
		
		We compute the pointwise limit on each subset.
		\begin{itemize}
			\item On $A_1$, $Q > 0$ and $f - P > \epsilon Q > 0$, so $|f-P-\epsilon Q| = (f-P) - \epsilon Q \nearrow |f-P|$ as $\epsilon \to 0^+$; since the numerator is negative and $\epsilon Q > 0$, the quotient converges to $-\varphi_-(x, |f-P|)$, and multiplication by $Q$ yields $-\varphi_-(x,|f-P|)Q$.
			\item On $A_2$, $Q < 0$ and $f - P > 0$, so $|f-P-\epsilon Q| = (f-P) + \epsilon|Q| \searrow |f-P|$ as $\epsilon \to 0^+$; writing $Q = -|Q|$, the quotient equals $-(\Phi(x,(f-P)+\epsilon|Q|) - \Phi(x,f-P))/(\epsilon|Q|)$, which converges to $-\varphi_+(x, |f-P|)$, and multiplication by $Q$ yields $-\varphi_+(x,|f-P|)Q$.
			\item On $A_4$, $Q < 0$ and $f = P$, so $|f-P-\epsilon Q| = \epsilon|Q|$; writing $Q = -|Q|$, the quotient reduces to $-(\Phi(x,\epsilon|Q|) - \Phi(x,0))/(\epsilon|Q|)$, which converges to $-\varphi_+(x,0)$, and multiplication by $Q = -|Q|$ yields $\varphi_+(x,0)|Q|$.
			\item On $A_5$, $Q > 0$ and $f \le P$, so $|f-P-\epsilon Q| = |f-P| + \epsilon Q \searrow |f-P|$ as $\epsilon \to 0^+$; since both numerator and denominator are positive, the quotient converges to $\varphi_+(x,|f-P|)$, and multiplication by $Q$ yields $\varphi_+(x,|f-P|)Q$.
			\item On $A_7$, $Q < 0$ and $f \le P+\epsilon Q < P$, so $f < P$ in the limit and $|f-P-\epsilon Q| = (P+\epsilon Q-f) \nearrow (P-f) = |f-P|$ as $\epsilon \to 0^+$; writing $Q = -|Q|$, the quotient equals $(\Phi(x,|f-P|-\epsilon|Q|) - \Phi(x,|f-P|))/(-\epsilon|Q|)$, which converges to $\varphi_-(x,|f-P|)$, and multiplication by $Q$ yields $\varphi_-(x,|f-P|)Q$.
		\end{itemize}
		Then we have  
		\begin{align*}
			\lim_{\epsilon \to 0^+} I_1 &= -\int_{\{x \in \Omega : Q(x)>0, f(x)>P(x)\}} \varphi_-(x, |f(x)-P(x)|) Q(x)  d\mu(x), \\
			\lim_{\epsilon \to 0^+} I_2 &= -\int_{\{x \in \Omega : Q(x)<0, f(x)>P(x)\}} \varphi_+(x, |f(x)-P(x)|) Q(x)  d\mu(x), \\
			\lim_{\epsilon \to 0^+} I_4 &= \int_{\{x \in \Omega : Q(x)<0, f(x)=P(x)\}} \varphi_+(x, 0) |Q(x)|  d\mu(x), \\
			\lim_{\epsilon \to 0^+} I_5 &= \int_{\{x \in \Omega : Q(x)>0, f(x)<P(x)\}} \varphi_+(x, |f(x)-P(x)|) Q(x)  d\mu(x) \\
			&\quad + \int_{\{x \in \Omega : Q(x)>0, f(x)=P(x)\}} \varphi_+(x, 0) |Q(x)|  d\mu(x), \\
			\lim_{\epsilon \to 0^+} I_7 &= \int_{\{x \in \Omega : Q(x)<0, f(x)<P(x)\}} \varphi_-(x, |f(x)-P(x)|) Q(x)  d\mu(x).
		\end{align*}
		
		For the remaining integrals, let us consider a decreasing and convergent sequence $\{\epsilon_n\} \searrow 0$, and the decreasing sets $A_{3,n} = \{x \in \Omega : Q(x)<0, P(x)+\epsilon_n Q(x)<f(x)<P(x)\}$. Then $\bigcap_{n=1}^\infty A_{3,n} = \emptyset$, and by the Dominated Convergence Theorem, we have $\lim_{\epsilon \to 0^+} I_3 = 0$. Analogously, let us consider the sets $A_{6,n} = \{x \in \Omega : Q(x)>0, P(x) < f(x) \le P(x)+\epsilon_n Q(x)\}$. Then $\bigcap_{n=1}^\infty A_{6,n} = \emptyset$, and again, by the Dominated Convergence Theorem, we have $\lim_{\epsilon \to 0^+} I_6 = 0$.
		
		Finally, collecting all the above limits, we group the domains where $(f(x)-P(x))Q(x) > 0$ and $(f(x)-P(x))Q(x) < 0$ by factoring the sign, which yields
		\begin{align*}
			F_Q^+(0) &= -\int_{\{(f(x)-P(x))Q(x) > 0\}} \varphi_-(x, |f(x)-P(x)|) \text{sgn}(f(x)-P(x)) Q(x)  d\mu(x) \\
			&\quad - \int_{\{(f(x)-P(x))Q(x) < 0\}} \varphi_+(x, |f(x)-P(x)|) \text{sgn}(f(x)-P(x)) Q(x)  d\mu(x) \\
			&\quad + \int_{\{x \in \Omega : f(x)=P(x)\}} \varphi_+(x, 0) |Q(x)|  d\mu(x).
		\end{align*}
		
		Substituting these limits into the minimality condition $0 \le F_Q^+(0)$ and rearranging the terms yields the stated inequality \ref{eq:ineq_characterization}.
		
		Conversely, assume that the inequality \ref{eq:ineq_characterization} holds for all $Q \in S$. This means that $F_Q^+(0) \ge 0$ for every $Q \in S$. By the convexity of the functional $F_Q$ on $[0, \infty)$, for $\epsilon = 1$ we have
		\[ F_Q(1) - F_Q(0) \ge F_Q^+(0)(1 - 0) \ge 0. \]
		This implies $F_Q(1) \ge F_Q(0)$, which translates to
		\[ \int_\Omega \Phi(x, |f(x) - (P(x) + Q(x))|)  d\mu(x) \ge \int_\Omega \Phi(x, |f(x) - P(x)|)  d\mu(x). \]
		Since $S$ is a linear subspace, any element in $S$ can be written as $P+Q$ for some $Q \in S$. Thus, $P \in \mathcal{M}_\Phi(f/S)$.
	\end{proof}
	
	\begin{lemma}\label{lem:jackson_musielak}
		Under the same hypotheses as in Theorem \ref{thm:characterization}, if \(P_1,P_2\in\mathcal M_\Phi(f/S)\) for some \(f\in L^\Phi(\Omega)\), and defining \(g_1:=f-P_1\) and \(g_2:=f-P_2\), we have
		\[
		g_1(x)g_2(x)\ge0\quad\text{for almost every }x\in\Omega.
		\]
	\end{lemma}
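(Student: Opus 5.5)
The plan is to use convexity of the set of minimizers together with a pointwise strict inequality. Set $m:=\inf_{Q\in S}\rho_\Phi(f-Q)$, which is finite: under the $\Delta_2$ condition $\rho_\Phi(f)<\infty$ for $f\in L^\Phi(\Omega)$. Put $P_0:=\tfrac12(P_1+P_2)\in S$, so that $f-P_0=\tfrac12(g_1+g_2)$. For a.e. $x$, the function $\Phi(x,\cdot)$ is convex and non-decreasing, hence
\[
\Phi\bigl(x,\tfrac12|g_1(x)+g_2(x)|\bigr)\le \Phi\bigl(x,\tfrac12(|g_1(x)|+|g_2(x)|)\bigr)\le \tfrac12\Phi(x,|g_1(x)|)+\tfrac12\Phi(x,|g_2(x)|).
\]
Integrating gives $m\le\rho_\Phi(f-P_0)\le \tfrac12 m+\tfrac12 m=m$. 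Since all quantities are finite, the pointwise inequalities must be equalities a.e.:
\[
\Phi\bigl(x,\tfrac12|g_1+g_2|\bigr)=\Phi\bigl(x,\tfrac12(|g_1|+|g_2|)\bigr)\quad\text{a.e.}
\]

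Now argue by contradiction. Let $E:=\{x: g_1(x)g_2(x)<0\}$ and suppose $\mu(E)>0$. On $E$ we have the strict inequality $a(x):=\tfrac12|g_1+g_2|<b(x):=\tfrac12(|g_1|+|g_2|)$. The key point is that an Orlicz function is \emph{strictly} increasing on $[0,\infty)$: $\Phi(x,t)>0$ for $t>0$ and $\Phi(x,0)=0$, so by convexity $t\mapsto\Phi(x,t)/t$ is non-decreasing. Thus for $0\le s<t$ we get $\Phi(x,s)\le (s/t)\Phi(x,t)<\Phi(x,t)$, using $\Phi(x,t)>0$ (and when $s=0$ this is just $0<\Phi(x,t)$). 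Applying this with $s=a(x)$ and $t=b(x)$ gives $\Phi(x,a(x))<\Phi(x,b(x))$ on $E$. Integrating the nonnegative difference, which is strictly positive on a set of positive measure, yields $\rho_\Phi(f-P_0)<\tfrac12\rho_\Phi(g_1)+\tfrac12\rho_\Phi(g_2)=m$. This contradicts the minimality of $m$, so $\mu(E)=0$, i.e.\ $g_1g_2\ge0$ a.e.

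The only delicate step is the strict monotonicity of $\Phi(x,\cdot)$: convexity alone gives only non-strict inequalities, and the triangle-inequality step is exactly where strictness enters. The required strictness comes from the positivity condition $\Phi(x,t)>0$ for $t>0$ in Definition~\ref{orlicz function}, and it holds for a.e.\ $x$ by Definition~\ref{def:MO_function}(ii). The measurability of $E$ and the integrability needed to subtract the integrals are routine, given finiteness of $m$. Notably, the characterization inequality of Theorem~\ref{thm:characterization} is not needed; only the $\Delta_2$ hypothesis is used, to ensure $m<\infty$.
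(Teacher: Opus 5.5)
Your proof is correct and follows essentially the same route as the paper. Like the paper, you note that the midpoint $\tfrac12(P_1+P_2)$ is again a minimizer. On the set where $g_1,g_2$ have opposite signs, strict monotonicity of $\Phi(x,\cdot)$ (from convexity, $\Phi(x,0)=0$ and $\Phi(x,t)>0$ for $t>0$) then gives a strict pointwise inequality, and integrating it contradicts minimality. You also make explicit a point the paper leaves implicit: $m<\infty$, which follows from $\Delta_2$ via $\rho_\Phi(f)<\infty$, is needed for the integrated strict inequality to yield a contradiction.
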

	
	\begin{proof}
		Since $P_1, P_2 \in \mathcal{M}_\Phi(f/S)$, we can set $\rho := \rho_\Phi(g_1) = \rho_\Phi(g_2)$.
		
		Since $S$ is a linear subspace, $\frac{P_1 + P_2}{2} \in S$. By the convexity of $\Phi(x, \cdot)$, for almost every $x \in \Omega$ we obtain:
		\[
		\rho_\Phi\left( f - \frac{P_1 + P_2}{2} \right) \le \frac{1}{2} \rho_\Phi(g_1) + \frac{1}{2} \rho_\Phi(g_2) = \rho.
		\]
		This implies that $\frac{P_1 + P_2}{2} \in \mathcal{M}_\Phi(f/S)$, which forces the modular inequality above to be an equality. Consequently, the pointwise inequality
		\[
		\Phi\left(x, \frac{|g_1(x) + g_2(x)|}{2}\right) \le \frac{1}{2}\Phi(x, |g_1(x)|) + \frac{1}{2}\Phi(x, |g_2(x)|)
		\]
		must hold as an equality for almost every $x \in \Omega$.
		
		Assume, for the sake of contradiction, that there exists an $\mathcal{A}$-measurable set $A \subset \Omega$ with $\mu(A) > 0$ where $g_1(x)$ and $g_2(x)$ have opposite signs. For every $x \in A$, the strict inequality $|g_1(x) + g_2(x)| < |g_1(x)| + |g_2(x)|$ holds.
		
		By the definition of Musielak-Orlicz functions, $\Phi(x, t) = 0$ if and only if $t = 0$. This property, combined with convexity, ensures that $\Phi(x, \cdot)$ is strictly increasing on $[0, \infty)$. Indeed, for any $0 \le u < v$, setting $\lambda = u/v \in [0, 1)$  and using convexity and $\Phi(x,0) = 0$ yields $$\Phi(x,u) = \Phi(x, \lambda v) \le \lambda\Phi(x,v) + (1-\lambda)\Phi(x,0) = \lambda\Phi(x,v) < \Phi(x,v).$$
		Therefore, for $x \in A$, applying this strict monotonicity to the sum of the absolute values gives:
		\[
		\Phi\left(x, \frac{|g_1(x) + g_2(x)|}{2}\right) < \Phi\left(x, \frac{|g_1(x)| + |g_2(x)|}{2}\right) \le \frac{1}{2}\Phi(x, |g_1(x)|) + \frac{1}{2}\Phi(x, |g_2(x)|).
		\]
		
		Since this strict inequality holds pointwise on the set $A$ of positive measure, integrating over the entire domain $\Omega$ we get 
		\begin{align*}
			\rho_\Phi\left( f - \frac{P_1 + P_2}{2} \right) &= \int_\Omega \Phi\left(x, \frac{|g_1(x) + g_2(x)|}{2}\right)  d\mu(x) \\
			&< \int_\Omega \left( \frac{1}{2}\Phi(x, |g_1(x)|) + \frac{1}{2}\Phi(x, |g_2(x)|) \right)  d\mu(x) = \rho.
		\end{align*}
		This strict inequality, $\rho_\Phi\left( f - \frac{P_1 + P_2}{2} \right) < \rho,$ contradicts the fact that $\rho$ is the infimum over $S$. Thus, $\mu(A) = 0$, concluding the proof.
	\end{proof}
	
	\section{Uniqueness} \label{sec:uniqueness}
	
	It is known that global strict convexity of the generating function guarantees uniqueness of the best approximant in Musielak–Orlicz spaces. The theorem below formulates this classical uniqueness result in a localized form: it suffices that the generating function be strictly convex with respect to its variable $t$ on an interval $[0, \beta(x))$, where $\beta$ is the strictly positive measurable function on $\Omega$ characterized in \ref{prop:equiv_caso_I}.
	
	\begin{theorem}\label{thm:unicidad_esszero}
		Let \(\Phi\) be a Musielak--Orlicz function satisfying \(\Delta_2\) and suppose \(\mu(\Omega\setminus\Omega_I)=0\). Let \(S\subset L^\infty(\Omega)\) be a linear subspace and let \(f\in L^\Phi(\Omega)\) be such that for every \(Q\in\mathcal M_\Phi(f/S)\) and every $\mathcal{A}$-measurable set \(A\subset\Omega\) with \(\mu(A)>0\), the following equality holds:
		\[
		\operatorname{ess\,inf}_{x\in A}|f(x)-Q(x)|=0.
		\]
		Then \(\mathcal M_\Phi(f/S)\) is a singleton.
	\end{theorem}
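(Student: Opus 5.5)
Existence is supplied by Corollary~\ref{cor:existence} (with $S$ finite-dimensional as in the standing setting), so we only need to show that $\mathcal M_\Phi(f/S)$ has at most one element. Take $P_1,P_2\in\mathcal M_\Phi(f/S)$ and set $g_1=f-P_1$, $g_2=f-P_2$. As in the proof of Lemma~\ref{lem:jackson_musielak}, the midpoint $P_0=\tfrac12(P_1+P_2)$ also lies in $\mathcal M_\Phi(f/S)$. Moreover, the pointwise convexity inequality
\[
\Phi\Bigl(x,\tfrac{|g_1(x)+g_2(x)|}{2}\Bigr)\le \tfrac12\Phi(x,|g_1(x)|)+\tfrac12\Phi(x,|g_2(x)|)
\]
must then hold with equality for a.e.\ $x$. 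By Lemma~\ref{lem:jackson_musielak}, $g_1g_2\ge0$ a.e., so $|g_1+g_2|=|g_1|+|g_2|$ a.e. Hence $\Phi(x,\cdot)$ is affine on the segment joining $|g_1(x)|$ and $|g_2(x)|$ for a.e.\ $x$.

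Suppose, for contradiction, that $P_1\neq P_2$, so that $B=\{x: g_1(x)\ne g_2(x)\}$ has positive measure. Let $\beta$ be the measurable threshold provided by Proposition~\ref{prop:equiv_caso_I}, so that $\Phi(x,\cdot)$ is strictly convex on $[0,\beta(x))$ a.e. At a.e.\ $x\in B$, affinity on the segment with distinct endpoints forbids that segment from lying inside $[0,\beta(x))$. Therefore $\max(|g_1(x)|,|g_2(x)|)\ge\beta(x)$ on $B$. We then want to show that on a positive-measure subset, $|g_0|=\tfrac12(|g_1|+|g_2|)$ is bounded away from zero, where $g_0=f-P_0$. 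This would contradict the hypothesis applied to $Q=P_0$.

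To carry this out, choose $\delta>0$ and a measurable $B'\subset B$ with $\mu(B')>0$ on which $\beta\ge\delta$. This is possible because $\beta>0$ is measurable, so $B=\bigcup_n B\cap\{\beta\ge 1/n\}$. On $B'$ we have $\max(|g_1|,|g_2|)\ge\delta$, hence $|g_0|\ge\delta/2$. This gives $\operatorname{ess\,inf}_{B'}|f-P_0|\ge\delta/2>0$, contradicting the hypothesis for $Q=P_0\in\mathcal M_\Phi(f/S)$ and $A=B'$. Therefore $\mu(B)=0$, which gives $P_1=P_2$ a.e. In fact one could even use $Q=P_1$ directly on the part of $B'$ where $|g_1|\ge\delta$, but using the midpoint avoids splitting into cases.

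The step I expect to be most delicate is the pointwise deduction on $B$. Equality of integrals together with a pointwise inequality must first give pointwise equality a.e., which needs $\rho_\Phi(g_1)<\infty$; this holds by $\Delta_2$, or simply because the infimum is finite. Next, the sign coherence from Lemma~\ref{lem:jackson_musielak} is needed so that the midpoint argument reduces to affinity of $\Phi(x,\cdot)$ on $[\min|g_i|,\max|g_i|]$. When $g_1\ne g_2$ with the same sign, these absolute values are genuinely distinct, so the segment is non-degenerate. Strict convexity on $(0,\beta(x))$ then extends to $[0,\beta(x))$ through $\Phi(x,0)=0$ and continuity.

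A worry is that the hypothesis, read literally over all positive-measure $A$, already forces $f=Q$ a.e. That would make the statement nearly trivial, since all minimizers would then equal $f$ a.e. and therefore coincide. If so, the proof collapses to this observation: for $Q_1,Q_2$ in $\mathcal M_\Phi(f/S)$, taking $A=\{|f-Q_i|>1/n\}$ shows these sets are null, so $Q_1=f=Q_2$ a.e. I would present this short argument, noting that the convexity route above covers a weaker, localized reading of the hypothesis.
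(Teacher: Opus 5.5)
Your proposal is correct, and both routes work. The convexity route is the paper's argument run in contrapositive. The paper applies the hypothesis to the midpoint $P^*$ on $F_{m_0}=\{P_1\neq P_2\}\cap\{\beta\ge 1/m_0\}$ and finds a positive-measure set where $|f-P^*|$ is small; there both $|f-P_i|$ lie below $\beta(x)$, so strict convexity makes the modular inequality strict. You instead first extract pointwise equality, i.e.\ affinity of $\Phi(x,\cdot)$ between $|g_1(x)|$ and $|g_2(x)|$, and deduce $|f-P_0|\ge\delta/2$ on such a set.

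The short argument you say you would actually present is genuinely different, and your worry is justified. Testing the hypothesis on $A=\{|f-Q|>1/n\}$ shows that every $Q\in\mathcal M_\Phi(f/S)$ equals $f$ a.e. So $\mathcal M_\Phi(f/S)$ has at most one element without using $\Delta_2$, $\mu(\Omega\setminus\Omega_I)=0$, Lemma~\ref{lem:jackson_musielak}, or any convexity. Moreover, the hypothesis together with nonemptiness forces $f\in S$.

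That route exposes the statement as degenerate: its only real content is existence. You correctly supply existence through Corollary~\ref{cor:existence} under finite-dimensionality. The paper's proof omits this step, and it is genuinely needed: with $\Phi(x,t)=t^2$, $S$ the polynomials on $[0,1]$, and $f(x)=|x-\tfrac12|$, the infimum $0$ is not attained. Hence $\mathcal M_\Phi(f/S)=\emptyset$, and the hypothesis holds vacuously.

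The convexity route (yours and the paper's) buys robustness instead. It invokes the hypothesis only for the midpoint and only on subsets of $\{P_1\neq P_2\}\cap\{\beta\ge\delta\}$, so it is the argument that would survive a genuinely localized version of the hypothesis.

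Your observation also shows that the illustrative examples after the theorem cannot satisfy the hypothesis with $f\notin S$. Indeed, no countable measurable partition of $[0,1]$ into at least two pieces can have every piece meet every positive-measure set in positive measure: take $A$ to be a piece of positive measure, which every other piece misses.
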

	
	\begin{proof}
		Suppose there exist two distinct best modular approximants 
		$P_1, P_2 \in \mathcal{M}_\Phi(f/S)$, and let
		\[
		\rho = \inf_{Q \in S} \rho_\Phi(f - Q) 
		= \rho_\Phi(f - P_1) = \rho_\Phi(f - P_2).
		\]
		Let $E = \{x \in \Omega : P_1(x) \neq P_2(x)\}$, and assume $\mu(E) > 0$.
		Since $S \subset L^\infty(\Omega)$ is a linear subspace and $\rho_\Phi$ is 
		convex, the average $P^* = \frac{P_1+P_2}{2}$ belongs to $S$ and satisfies
		\[
		\rho_\Phi(f-P^*) \leq \frac{1}{2}\rho_\Phi(f-P_1)
		+\frac{1}{2}\rho_\Phi(f-P_2) = \rho,
		\]
		hence $P^* \in \mathcal{M}_\Phi(f/S)$.
		
		Since $\mu(\Omega \setminus \Omega_I) = 0$, by Proposition~\ref{prop:equiv_caso_I} there 
		exists an $\mathcal{A}$-measurable function $\beta:\Omega\to(0,\infty)$ such that 
		$\Phi(x, \cdot)$ is strictly convex on $(0, \beta(x))$ for a.e. $x \in \Omega$.
		
		Setting $g_i = f - P_i$ for $i=1,2$, Lemma~\ref{lem:jackson_musielak} gives 
		$g_1(x)g_2(x) \geq 0$ for a.e. $x \in \Omega$, that is, $g_1(x)$ and $g_2(x)$ 
		have the same sign or at least one of them vanishes. In either case, for 
		a.e. $x \in E$,
		\begin{equation}\label{eq:average}
			|f(x)-P^*(x)| = \left|\frac{g_1(x)+g_2(x)}{2}\right| 
			= \frac{|g_1(x)|+|g_2(x)|}{2},
		\end{equation}
		and in particular
		\[
		|f(x)-P_i(x)| \leq 2|f(x)-P^*(x)|, \quad i=1,2, 
		\quad \text{a.e. on } E.
		\]
		Moreover, since $P_1(x)\neq P_2(x)$ on $E$, we have $g_1(x)\neq g_2(x)$, 
		and combined with $g_1(x)g_2(x)\geq 0$ this gives $|g_1(x)|\neq|g_2(x)|$ 
		for a.e. $x\in E$: indeed, if both are nonzero they share the same sign and 
		differ in value, while if one vanishes the other does not. Thus the two 
		arguments of $\Phi(x,\cdot)$ appearing in the convexity inequality are 
		distinct for a.e. $x \in E$.
		
		Consider the sets $F_m = \{x \in E : \beta(x) \geq 1/m\}$ for $m \in \mathbb{N}$.
		Since $\beta(x) > 0$ a.e., we have $F_m \nearrow E$ up to a null set, and 
		since $\mu(E) > 0$, there exists $m_0 \in \mathbb{N}$ such that 
		$\mu(F_{m_0}) > 0$.
		
		Applying the hypothesis to $A = F_{m_0}$ and $P^* \in \mathcal{M}_\Phi(f/S)$,
		\[
		\operatorname{ess\,inf}_{x \in F_{m_0}} |f(x) - P^*(x)| = 0,
		\]
		so for every $n \in \mathbb{N}$ the set 
		\[
		A_n = \left\{x \in F_{m_0} : |f(x) - P^*(x)| < \frac{1}{2n}\right\}
		\]
		has positive measure. Fix $n_0 \geq m_0$. For a.e. $x \in A_{n_0}$, using 
		\eqref{eq:average} and the fact that $A_{n_0} \subset F_{m_0}$,
		\[
		|f(x) - P_i(x)| \leq 2|f(x) - P^*(x)| < \frac{1}{n_0} 
		\leq \frac{1}{m_0} \leq \beta(x), \quad i=1,2.
		\]
		Thus, for a.e. $x \in A_{n_0}$, both $|f(x)-P_1(x)|$ and $|f(x)-P_2(x)|$ 
		are distinct values lying in $[0,\beta(x))$, where $\Phi(x,\cdot)$ is strictly 
		convex. Applying strict convexity together with \eqref{eq:average} yields
		\[
		\Phi\!\left(x, |f(x)-P^*(x)|\right) < \frac{1}{2}\Phi\!\left(x, |f(x)-P_1(x)|\right) 
		+ \frac{1}{2}\Phi\!\left(x, |f(x)-P_2(x)|\right)
		\quad \text{a.e. on } A_{n_0}.
		\]
		On $\Omega \setminus A_{n_0}$, the standard convexity of $\Phi(x,\cdot)$ gives 
		the non-strict inequality. Since $\mu(A_{n_0}) > 0$, integrating over $\Omega$ 
		yields
		\[
		\rho_\Phi(f - P^*) < \frac{1}{2}\rho_\Phi(f-P_1) 
		+ \frac{1}{2}\rho_\Phi(f-P_2) = \rho,
		\]
		contradicting $P^* \in \mathcal{M}_\Phi(f/S)$. Hence $\mu(E) = 0$, 
		that is, $P_1 = P_2$ almost everywhere.
	\end{proof}
	
	In order to illustrate the uniqueness result established in the previous Theorem, we present two examples in both of which $f$ is nowhere continuous. 
	
	\begin{example} \label{exa-1}
		Consider $\Omega = [0, 1]$ and let $\Phi: [0, 1] \times [0, \infty) \to [0, \infty)$ be a Musielak-Orlicz function defined by $\Phi(x, t) = \Phi(t)$, where
		\begin{equation*}
			\Phi(t) = \begin{cases}
				t^2 & \text{if } 0 \le t \le 1, \\
				2t - 1 & \text{if } t > 1.
			\end{cases}
		\end{equation*}
		This function satisfies the $\Delta_2$ condition and $\mu(\Omega \setminus \Omega_I) = 0$.
		
		Let $S = \text{span}\left\{\chi_{[0, 1/2)}, \chi_{[1/2, 1]}\right\}$ be the subspace of step functions.  
		
		We construct a function $f$ whose image is dense in the interval $[-M, M]$ for $M > 1$. Let $\{q_n\}_{n \in \mathbb{N}}$ be an enumeration of $\mathbb{Q} \cap [-M, M]$ and let $\{E_n\}_{n \in \mathbb{N}}$ be a partition of $[0, 1]$ into sets that are dense in measure, i.e., $\mu(E_n \cap A) > 0$ for every $\mathcal{A}$-measurable set $A \subset [0, 1]$ with $\mu(A) > 0$. Define
		\begin{equation*}
			f(x) = q_n \quad \text{if } x \in E_n.
		\end{equation*}
		
		Let $Q \in \mathcal{M}_\Phi(f/S)$ be a best modular approximant, say 
		$$Q(x) = c_1 \chi_{[0, 1/2)}(x) + c_2 \chi_{[1/2, 1]}(x).$$ We claim that 
		$c_1, c_2 \in [-M, M]$. Indeed, suppose for contradiction that $c_1 > M$ 
		(the case $c_1 < -M$ is analogous). Define the projected function 
		$$Q^*(x) = M \cdot \chi_{[0,1/2)}(x) + c_2 \chi_{[1/2,1]}(x) \in S.$$
		Since $f(x) \in [-M, M]$ a.e., for almost every $x \in [0, 1/2)$ we have 
		$f(x) \le M < c_1$, which gives:
		\begin{equation*}
			|f(x) - M| = M - f(x) < c_1 - f(x) = |f(x) - c_1|.
		\end{equation*}
		Since $\Phi$ is strictly increasing on $[0,\infty)$, 
		the strict inequality $\Phi(|f(x)-M|) < \Phi(|f(x)-c_1|)$ holds on a set 
		of positive measure. Integrating over $[0,1]$:
		\begin{equation*}
			\rho_\Phi(f - Q^*) < \rho_\Phi(f - Q),
		\end{equation*}
		contradicting the optimality of $Q$. Therefore $c_1 \in [-M,M]$, and by 
		the same argument $c_2 \in [-M,M]$.

		Let $A \subset [0, 1]$ be an $\mathcal{A}$-measurable set with $\mu(A) > 0$. Without loss of generality, assume $\mu(A \cap [0, 1/2)) > 0$. For any $\varepsilon > 0$, since $c_1 \in [-M, M]$, the density of $\mathbb{Q} \cap [-M, M]$ guarantees the existence of a rational $q_{n_0}$ such that $|q_{n_0} - c_1| < \varepsilon$. 
		
		By the construction of $f$, the set where $f(x) = q_{n_0}$ is $E_{n_0}$. Since $E_{n_0}$ is dense in measure, the intersection $E_{n_0} \cap A \cap [0, 1/2)$ has positive measure. On this set, we have
		\begin{equation*}
			|f(x) - Q(x)| = |q_{n_0} - c_1| < \varepsilon.
		\end{equation*}
		Since this holds for every $\varepsilon > 0$, it follows that $\operatorname{ess\,inf}_{x \in A} |f(x) - Q(x)| = 0$, satisfying the theorem's hypothesis. Consequently, the best modular approximant of $f$ in $S$ is unique.
	\end{example}
	
	\begin{example}
		Let $\Phi$ be defined as in the previous example and let 
		$S = \text{span}\{e^x, e^{-x}\}$, which is a two-dimensional subspace 
		of $L^\infty([0, 1])$. 	We construct a function $f$ whose values are prescribed by a dense 
		family of continuous functions. Let $\{s_n\}_{n\in\mathbb{N}}$ be a 
		sequence that is dense in $C([0,1])$ with respect to the supremum norm 
		$\|\cdot\|_\infty$ (for instance, an enumeration of all polynomials with 
		rational coefficients), and let $\{E_n\}_{n\in\mathbb{N}}$ be a partition 
		of $[0,1]$ as in Example~\ref{exa-1}. Define
		\[
		f(x) = s_n(x), \quad x \in E_n.
		\]
		
		Let $Q \in \mathcal{M}_\Phi(f/S)$ be a best modular approximant. 
		Let $A \subset [0, 1]$ be any $\mathcal{A}$-measurable set with $\mu(A) > 0$ and let 
		$\varepsilon > 0$. Since $\{s_n\}_{n \in \mathbb{N}}$ is dense in 
		$C([0,1])$ with respect to $\|\cdot\|_\infty$, there exists $s_{n_0}$ 
		such that:
		\begin{equation*}
			\sup_{x \in [0,1]} |s_{n_0}(x) - Q(x)| < \varepsilon.
		\end{equation*}
		Since $E_{n_0}$ is dense in measure, the intersection $E_{n_0} \cap A$ 
		has positive measure. For any $x \in E_{n_0} \cap A$:
		\begin{equation*}
			|f(x) - Q(x)| = |s_{n_0}(x) - Q(x)| \le 
			\sup_{x \in [0,1]}|s_{n_0}(x) - Q(x)| < \varepsilon.
		\end{equation*}
		Since this holds for every $\varepsilon > 0$, it follows that 
		$\operatorname{ess\,inf}_{x \in A} |f(x) - Q(x)| = 0$, satisfying the 
		theorem's hypothesis. Consequently, the best modular approximant of $f$ 
		in $S$ is unique.
	\end{example}
	
	Haar Spaces, alternatively named Tchebycheff systems in part of the pertaining literature, have been studied for their interpolation properties that resemble the polynomials.
	
	\begin{definition}[Haar Space]
		A subspace $S \subset C([a, b])$ of dimension $n$ is called a Haar space if every non-zero element $P \in S$ has at most $n-1$ distinct roots in $[a, b]$.
	\end{definition}
	
	Below we state a uniqueness result for best modular approximation when the approximating space is a Haar (Tchebycheff) space on a one-dimensional interval. Haar spaces mirror polynomial interpolation and, by the Mairhuber–Curtis theorem, do not exist on multidimensional domains with nonempty interior; hence we restrict to $\Omega=[a,b]$. For examples and further details on Haar spaces see \cite{key:K, key:Sc}.
	
	\begin{theorem}[Uniqueness in Haar spaces]\label{thm:uniqueness_haar}
		Let \(\Omega=[a,b]\) and let \(\Phi\) be a Musielak--Orlicz function satisfying \(\Delta_2\). Let \(S\subset C([a,b])\) be a Haar space of dimension \(n\) and let \(f\in C([a,b])\setminus S\). Then \(\mathcal M_\Phi(f/S)\) is a singleton.
	\end{theorem}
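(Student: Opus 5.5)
The plan is to combine the sign-coherence Lemma~\ref{lem:jackson_musielak}, the variational characterization Theorem~\ref{thm:characterization}, and the zero-counting property of Haar spaces. It follows the classical Jackson-type argument for $L^1$.

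\textbf{Step 1 (a best approximant has at least $n$ zeros).} Suppose $P_1,P_2\in\mathcal M_\Phi(f/S)$. As in the proof of Theorem~\ref{thm:unicidad_esszero}, convexity gives $P^*=\frac{P_1+P_2}{2}\in\mathcal M_\Phi(f/S)$. I claim that for every $P\in\mathcal M_\Phi(f/S)$ the continuous function $f-P$ has at least $n$ distinct zeros in $[a,b]$.

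Assume instead that $f-P$ vanishes only at $z_1<\dots<z_k$ with $k\le n-1$. Since $f\notin S$, $f-P\not\equiv0$. Then $\mu(\{f=P\})=0$, so the right-hand side of \eqref{eq:ineq_characterization} vanishes. Moreover, $f-P$ has constant sign on each component of $[a,b]\setminus\{z_1,\dots,z_k\}$.

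Let $w_1<\dots<w_m$ ($m\le k\le n-1$) be the zeros at which $f-P$ actually changes sign. I will invoke the standard Haar-space fact: for any $m\le n-1$ points in $(a,b)$ there is $Q\in S$ that vanishes exactly at those points and changes sign there. Choosing the sign of $Q$ suitably, $(f-P)Q>0$ at every point where neither factor vanishes. The set where $Q=0$ or $f=P$ is finite, hence null.

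For $t>0$, convexity and $\Phi(x,0)=0<\Phi(x,t)$ give $\varphi_-(x,t)\ge\Phi(x,t)/t>0$. Therefore the left-hand side of \eqref{eq:ineq_characterization} equals $\int_\Omega\varphi_-(x,|f-P|)\,|Q|\,d\mu>0$. This contradicts Theorem~\ref{thm:characterization}, which proves the claim.

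\textbf{Step 2 (concluding uniqueness).} Apply Step 1 to $P^*$, obtaining distinct zeros $x_1,\dots,x_n$ of $f-P^*$. By Lemma~\ref{lem:jackson_musielak}, $g_1g_2\ge0$ a.e. Since $g_i=f-P_i$ is continuous, this holds everywhere on $[a,b]$.

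At each $x_j$ we have $g_1(x_j)+g_2(x_j)=2(f-P^*)(x_j)=0$. Together with $g_1(x_j)g_2(x_j)\ge0$, this forces $g_1(x_j)=g_2(x_j)=0$. Hence $P_1-P_2\in S$ has the $n$ distinct roots $x_1,\dots,x_n$. The Haar property then gives $P_1=P_2$, so $\mathcal M_\Phi(f/S)$ is a singleton; it is nonempty by Corollary~\ref{cor:existence}.

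\textbf{Main obstacle.} The delicate point is the Haar-space interpolation fact in Step 1: producing $Q\in S$ whose sign changes occur exactly at prescribed points $w_1,\dots,w_m$ with $m\le n-1$, and nowhere else. I would first try to build it from the determinant construction $Q(x)=\det[u_i(y_j)]$. Here $u_1,\dots,u_n$ is a basis of $S$, and the nodes are $y_1,\dots,y_{n-1}$, with $y_j=w_j$ for $j\le m$ and the remaining nodes at or clustered near an endpoint. If $m<n-1$, the parity of extra simple zeros must be handled, for instance by placing the remaining nodes at $b$ or pairing them as double zeros. For this I would rely on the standard references \cite{key:K, key:Sc}.

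A minor check is that the $\Delta_2$ hypothesis is needed only to invoke Theorem~\ref{thm:characterization} and Lemma~\ref{lem:jackson_musielak}.
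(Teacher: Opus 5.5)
Your proof is correct, and it takes a shorter route than the paper's.

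The paper splits into two cases. If some best approximant $P_1$ has $\mu(Z(f-P_1))=0$, it uses the characterization and the Haar sign-change lemma to show that $f-P_1$ has at least $n$ sign changes, and sign coherence then forces $f-P_2$ to vanish at each of them. If every best approximant has a zero set of positive measure, it cites a lemma of Rice on the family $P_\lambda=\lambda P_1+(1-\lambda)P_2$ to obtain $n$ common zeros.

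You instead prove the weaker claim that $f-P$ has at least $n$ distinct zeros for every $P\in\mathcal M_\Phi(f/S)$. The contrapositive hypothesis (at most $n-1$ zeros) automatically makes $\{f=P\}$ null, so the paper's Case~1 argument applies verbatim. You then apply this claim to the midpoint $P^*$. Since $g_1g_2\ge 0$ holds everywhere by continuity, every zero of $f-P^*=\frac{g_1+g_2}{2}$ is a common zero of $g_1$ and $g_2$. In fact $Z(f-P^*)=Z(g_1)\cap Z(g_2)$, which is exactly what the Rice citation is being used to supply.

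This removes both the case distinction and the external lemma. It also shows that the hypothesis $f\notin S$ is not needed. What the paper's Case~1 gives in addition is Jackson-type information: the error has at least $n$ sign changes when its zero set is null. That information is not required for uniqueness.

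Both arguments rely on the same Haar fact: a $Q\in S$ whose sign changes occur exactly at $m\le n-1$ prescribed interior points. The paper simply cites it. For your integral argument you only need $\operatorname{sgn}Q=\operatorname{sgn}(f-P)$ off a finite set. One correction to your construction sketch: at most one node can sit at each endpoint, because coincident nodes make the determinant vanish identically. With more than two surplus nodes you therefore need the pairing and limiting argument. That argument is harmless, since the normalized limit is a nonzero element of $S$ with the right weak sign pattern and only finitely many zeros.
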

	
	\begin{proof}
		Suppose, for the sake of contradiction, that there exist two distinct best modular approximants
		$P_1, P_2 \in \mathcal{M}_\Phi(f/S)$. By Lemma~\ref{lem:jackson_musielak}, the errors satisfy
		\[
		\left[f(x)-P_1(x)\right]\left[f(x)-P_2(x)\right]\ge 0
		\quad\text{for a.e. }x\in[a,b].
		\tag{1}
		\]
		We distinguish two cases.
		
		\medskip\noindent\textit{Case 1: There exists $P_1\in\mathcal{M}_\Phi(f/S)$ such that
			$Z(f-P_1):=\{x\in[a,b]:f(x)=P_1(x)\}$ has measure zero.}
		
		We claim that $\operatorname{sgn}(f-P_1)$ has at least $n$ sign changes in $[a,b]$.
		
		Indeed, suppose that it has only $m<n$ sign changes at points
		$x_1<\dots<x_m$. By the fundamental property of Haar spaces, there exists 
		$Q\in S \setminus\{0\}$ that changes sign exactly at
		$\{x_i\}_{i=1}^m$. Hence,
		\[
		\operatorname{sgn}(f(x)-P_1(x))Q(x)>0
		\quad\text{for every }x\in[a,b]\setminus\{x_1,\dots,x_m\}.
		\]
		Moreover, since $Z(f-P_1)$ has measure zero, we have
		\[
		0<
		\varphi_-\left(x,\lvert f(x)-P_1(x)\rvert\right)
		\le
		\varphi_+\left(x,\lvert f(x)-P_1(x)\rvert\right)
		\quad\text{for a.e. }x\in[a,b].
		\] and so
		\[\begin{split}
			0& <
			\varphi_-\left(x,\lvert f(x)-P_1(x)\rvert\right)\operatorname{sgn}(f(x)-P_1(x))Q(x) \\
			& \le
			\varphi_+\left(x,\lvert f(x)-P_1(x)\rvert\right) \operatorname{sgn}(f(x)-P_1(x))Q(x).
		\end{split},\]
		for a.e. $x\in[a,b]$. Applying the characterization inequality \eqref{eq:ineq_characterization} with
		$P=P_1$, we obtain
		\[
		\begin{aligned}
			0&< \int_{\{(f-P_1)Q>0\}}
			\varphi_-\left(x,\lvert f(x)-P_1(x)\rvert\right)
			\operatorname{sgn}(f(x)-P_1(x))Q(x)d\mu(x)
			\\
			&\quad+
			\int_{\{(f-P_1)Q<0\}}
			\varphi_+\left(x,\lvert f(x)-P_1(x)\rvert\right)
			\operatorname{sgn}(f(x)-P_1(x))Q(x)d\mu(x)
			\\
			&\le
			\int_{\{f=P_1\}}\varphi_+(x,0)|Q(x)|d\mu(x)=0.
		\end{aligned}
		\]
		a contradiction.
		
		Now, since $P_2$ is also a best approximant, the relation
		\[
		\left[f(x)-P_1(x)\right]\left[f(x)-P_2(x)\right]\ge0
		\]
		forces $f-P_2$ to vanish at each of the $n$ points where $f-P_1$ changes sign.
		Consequently, the continuous function
		\[
		P_1-P_2\in S
		\]
		has at least $n$ distinct zeros in $[a,b]$. Since $S$ is a Haar space of
		dimension $n$, every non-zero element of $S$ has at most $n-1$ zeros.
		Therefore,
		\[
		P_1-P_2\equiv0,
		\]
		which contradicts the assumption that $P_1$ and $P_2$ are distinct.
		
		\medskip\noindent\textit{Case 2: For every $P\in\mathcal{M}_\Phi(f/S)$ we have $\mu\left(Z(f-P)\right)>0$.}
		
		Let $P_1,P_2\in\mathcal{M}_\Phi(f/S)$ and set $P_\lambda = \lambda P_1 + (1-\lambda)P_2$ for $\lambda\in[0,1]$. By convexity, $P_\lambda\in\mathcal{M}_\Phi(f/S)$ for all $\lambda$. Since $\mu\left(Z(f-P_\lambda)\right)>0$ for every $\lambda$, by \cite[Lemma.~4--7, p.~109]{Rice64} there exist $\lambda_1\neq\lambda_2$ such that $Z(f-P_{\lambda_1})\cap Z(f-P_{\lambda_2})$ contains at least $n$ points. Hence $P_{\lambda_1}=P_{\lambda_2}$ on these $n$ points, which implies $P_{\lambda_1}=P_{\lambda_2}$ and therefore $P_1=P_2$.
	\end{proof}
	
	In order to extend the uniqueness results to multidimensional domains $\Omega \subset \mathbb{R}^d$, we bypass the Haar condition by employing the concept of $1$-spaces introduced in \cite{BenaventeCostaPonceFavier2026}, which generalizes the algebraic behavior of polynomials.

	\begin{definition}[$0$-space and $1$-space]
		Let $\Omega \subset \mathbb{R}^d$ be a compact set equipped with the Lebesgue measure $\mu$. A finite-dimensional subspace $S \subset C(\Omega)$ is called a \textbf{$0$-space} if for every non-zero $P \in S$,
		\[
		\mu\left(\{x \in \Omega : P(x) = 0\}\right) = 0.
		\]
		A $0$-space $S$ is called a \textbf{$1$-space} if there exists $h \in S$ such that $h(x) > 0$ for all $x \in \Omega$.
	\end{definition}
	
	\begin{lemma}\label{lem:interpolation_rewritten}
		Let \(\Phi\) be a Musielak--Orlicz function satisfying \(\Delta_2\) and let \(S\subset C(\Omega)\) be a \(1\)-space. If \(f\in C(\Omega)\) and \(P\in\mathcal M_\Phi(f/S)\), then there exists \(x_0\in\Omega\) such that \(f(x_0)=P(x_0)\).
	\end{lemma}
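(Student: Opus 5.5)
We argue by contradiction, using the characterization of Theorem~\ref{thm:characterization} with the strictly positive element $h \in S$ supplied by the definition of $1$-space. Suppose $f(x) \neq P(x)$ for every $x \in \Omega$. Since $f - P$ is continuous on the compact set $\Omega$, which we assume connected as a domain, the intermediate value property forces $f - P$ to have constant sign. Replacing $f,P$ by $-f,-P$ if necessary (the modular is even), we may assume $f - P > 0$ on $\Omega$. By compactness there is then $\delta > 0$ with $f(x) - P(x) \ge \delta$ for all $x \in \Omega$. We also have $\{f = P\} = \emptyset$, so the right-hand side of \eqref{eq:ineq_characterization} vanishes for every $Q \in S$.

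Next we test \eqref{eq:ineq_characterization} with $Q = h$. Since $(f-P)h > 0$ everywhere, the set $\{(f-P)h<0\}$ is empty and only the first integral survives. The inequality then reads
\[
\int_\Omega \varphi_-\bigl(x, f(x)-P(x)\bigr)\, h(x)\, d\mu(x) \le 0 .
\]
The integrand is nonnegative, so it would have to vanish almost everywhere.

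To reach a contradiction we show the integrand is strictly positive a.e. For a.e.\ $x$, the function $\Phi(x,\cdot)$ is an Orlicz function, hence convex with $\Phi(x,0)=0$ and $\Phi(x,t)>0$ for $t>0$. Convexity gives $\varphi_-(x,t) \ge \Phi(x,t)/t > 0$ for every $t > 0$. Applying this at $t = f(x)-P(x) \ge \delta > 0$, and using $h(x) > 0$, we get a strictly positive integrand on all of $\Omega$, and $\mu(\Omega) > 0$. The integral is therefore strictly positive, contradicting the displayed inequality. Hence $f - P$ must vanish somewhere.

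The main obstacle is the sign-constancy step. If $\Omega$ were not connected, $f-P$ could be positive on one component and negative on another, and $Q = h$ would no longer put every point into a single region of \eqref{eq:ineq_characterization}. If connectedness of the compact domain cannot be assumed, I would first try to replace $h$ by a $Q \in S$ that has the sign of $f-P$ on each component. Failing that, I would state the lemma for connected $\Omega$, which is the reading of ``compact domain'' fixed in Section~\ref{sec:prelim}.
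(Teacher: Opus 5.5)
Your argument is correct, but it takes a different route from the paper. The paper does not invoke Theorem~\ref{thm:characterization}. Assuming $P-f>0$ on $\Omega$, it uses compactness to choose $0<\varepsilon<\min_\Omega(P-f)/\max_\Omega h$, so that $f<P-\varepsilon h<P$ pointwise. Strict monotonicity of $\Phi(x,\cdot)$ then gives $\rho_\Phi(f-(P-\varepsilon h))<\rho_\Phi(f-P)$ directly.

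Your proof is the infinitesimal version of the same idea. You show that the right derivative at $0^+$ of $\varepsilon\mapsto\rho_\Phi(f-P-\varepsilon h)$, which equals $-\int_\Omega\varphi_-(x,f-P)\,h\,d\mu$, is strictly negative, using $\varphi_-(x,t)\ge\Phi(x,t)/t>0$.

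What each route buys:
\begin{itemize}
\item The paper's argument is more elementary and self-contained. It needs neither the characterization theorem nor $\Delta_2$, only strict monotonicity and finiteness of the modular for the bounded function $f-P$.
\item Your argument needs no uniform gap: pointwise positivity of $f-P$ almost everywhere suffices. So your $\delta$ step is superfluous.
\end{itemize}

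On what you call the main obstacle: the paper's proof also tacitly assumes that a nonvanishing $f-P$ has constant sign, since it only treats $P-f>0$ and $P-f<0$ on all of $\Omega$. Connectedness of the compact domain is therefore implicit there too. It is genuinely needed. Take $\Omega=[0,1]\cup[2,3]$, $S=\operatorname{span}\{1\}$, $\Phi(x,t)=t^2$, and $f=1$ on $[0,1]$, $f=-1$ on $[2,3]$. The unique best approximant is $P=0$, and $f-P$ never vanishes. Reading ``compact domain'' as connected, as you propose, is the right fix.
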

	
	\begin{proof}
		Assume $P(x) - f(x) > 0$ for all $x \in \Omega$. Since $S$ is a $1$-space, there exists $h \in S$ with $h(x) > 0$ on the compact set $\Omega$; hence $h$ is bounded away from zero. Choose
		\[
		0 < \varepsilon < \frac{\min_{x\in\Omega}\left(P(x)-f(x)\right)}{\max_{x\in\Omega}h(x)}.
		\]
		Then $P-\varepsilon h \in S$ and $P(x)-\varepsilon h(x) > f(x)$ for all $x$. By the strict monotonicity of $\Phi(x,\cdot)$,
		\[
		\Phi\left(x,|f(x)-(P(x)-\varepsilon h(x))|\right) < \Phi\left(x,|f(x)-P(x)|\right)
		\quad \text{for all} \quad x\in\Omega,
		\]
		and integrating over $\Omega$ yields $\rho_\Phi(f-(P-\varepsilon h)) < \rho_\Phi(f-P)$, contradicting the optimality of $P$. The case $P(x)-f(x)<0$ is analogous.
	\end{proof}
	
	\begin{theorem}\label{thm:uniqueness_local_convexity_rewritten}
		Let \(\Omega\subset\mathbb R^d\) be compact with \(\Omega=\overline{\operatorname{int}(\Omega)}\). Let \(\Phi\) be a Musielak--Orlicz function satisfying \(\Delta_2\). If  $\mu(\Omega \setminus \Omega_I) = 0$ and \(S\subset C(\Omega)\) is a \(1\)-space, then for every \(f\in C(\Omega)\) the set \(\mathcal M_\Phi(f/S)\) is a singleton.
	\end{theorem}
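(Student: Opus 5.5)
The plan is to argue by contradiction, following the template of Theorem~\ref{thm:unicidad_esszero}. There the hypothesis $\operatorname{ess\,inf}_A|f-Q|=0$ was assumed; here it must be manufactured from continuity, from Lemma~\ref{lem:interpolation_rewritten}, and from the $1$-space structure of $S$.

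\textbf{Step 1 (set-up).} Existence follows from Corollary~\ref{cor:existence}. Suppose $P_1\neq P_2$ both lie in $\mathcal M_\Phi(f/S)$, and put $g_i=f-P_i$. By convexity of $\rho_\Phi$, every $P_\lambda=\lambda P_1+(1-\lambda)P_2$ with $\lambda\in[0,1]$ also lies in $\mathcal M_\Phi(f/S)$. Its error is $g_\lambda=\lambda g_1+(1-\lambda)g_2$.

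\textbf{Step 2 (structure of the errors).} Lemma~\ref{lem:jackson_musielak} gives $g_1g_2\ge 0$ a.e.; since $g_1,g_2$ are continuous and $\Omega=\overline{\operatorname{int}\Omega}$, this holds everywhere on $\Omega$. Since $S$ is a $0$-space and $P_1-P_2\neq0$, the set $E=\{P_1\neq P_2\}$ has full measure. On $E$ we have $|g_1|\neq|g_2|$ and
\[
|g_\lambda|=\lambda|g_1|+(1-\lambda)|g_2| .
\]
So pointwise on $E$, the three values $|g_1(x)|,|g_2(x)|,|g_\lambda(x)|$ are collinear, with $|g_\lambda(x)|$ strictly between the other two.

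\textbf{Step 3 (local strict convexity).} By Proposition~\ref{prop:equiv_caso_I} choose $\beta$ with $\Phi(x,\cdot)$ strictly convex on $(0,\beta(x))$. The key elementary observation is the following. If $0\le a<b$ and $a<\beta(x)$, then $\Phi(x,\cdot)$ is not affine on $[a,b]$, because $[a,b]$ contains a nondegenerate piece of $(0,\beta(x))$. Hence
\[
\Phi\bigl(x,\lambda a+(1-\lambda)b\bigr)<\lambda\Phi(x,a)+(1-\lambda)\Phi(x,b).
\]
It therefore suffices to find a set $A\subset E$ of positive measure on which $\min(|g_1|,|g_2|)<\beta$. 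Integrating the strict inequality over $A$ (and the ordinary convexity inequality elsewhere) gives $\rho_\Phi(f-P_{1/2})<\rho$, a contradiction.

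\textbf{Step 4 (finding $A$).} By Lemma~\ref{lem:interpolation_rewritten} applied to $P_1$, there is $x_1$ with $g_1(x_1)=0$. By continuity, for each $m$ the relatively open set $U_m=\{x:|g_1(x)|<1/m\}$ contains $x_1$. Since $\Omega=\overline{\operatorname{int}\Omega}$, $U_m$ meets $\operatorname{int}\Omega$ in a nonempty open set, so $\mu(U_m)>0$. With $F_m=\{\beta\ge 1/m\}$, the set $A=U_m\cap F_m\cap E$ satisfies $|g_1|<\beta$ on it, and it is enough that $\mu(U_m\cap F_m)>0$ for some $m$.

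\textbf{Main obstacle.} This last point is not automatic. The sets $U_m$ shrink to the zero set $Z(g_1)$ (possibly null) while the $F_m$ grow, so in principle $\beta$ could be smaller than $|g_1|$ almost everywhere near every zero of $g_1$. My first attempt would exploit the whole family $\{P_\lambda\}$. By Lemma~\ref{lem:interpolation_rewritten}, each $g_\lambda$ has a zero $x_\lambda$. By Step~2, a point of $E$ is a zero of at most one $g_\lambda$, so the zeros $x_\lambda$, $\lambda\in[0,1]$, are uncountably many distinct points. Moreover, $\min(|g_1|,|g_2|)\le\min(\lambda^{-1},(1-\lambda)^{-1})\,|g_\lambda|$ on $E$. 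The goal would then be a Baire-category or Fubini-type argument in $(x,\lambda)$: show that the measurable set
\[
\bigl\{(x,\lambda): |g_\lambda(x)|<\tfrac12\beta(x)\bigr\}
\]
has positive $\mu\otimes d\lambda$ measure. That would yield some $\lambda$, and a set $A$ of positive measure, to which Step~3 applies with $P_\lambda$ in place of $P_{1/2}$, convexly combined with $P_1$ or $P_2$.

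If that fails, I would instead use the explicit bound on $\beta$ coming from $x\in\Omega_I$ together with continuity of $f-P_\lambda$ in both variables. This is the step where I expect the real work (or an additional hypothesis) to lie.
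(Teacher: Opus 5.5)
Your Step 4 is a genuine gap, and it cannot be closed: the statement is false under its stated hypotheses. Take $\Omega=[-1,1]$ (or $[-1,1]^2$ with everything depending only on the first coordinate). Let $S=\operatorname{span}\{1,x^2\}$, which is a $1$-space, let $f(x)=-x$ and $\beta(x)=|x|(1-|x|)$, and set
\[
\Phi(x,t)=\begin{cases} t^2/\beta(x), & 0\le t<\beta(x),\\ 2t-\beta(x), & t\ge \beta(x).\end{cases}
\]
This is a Musielak--Orlicz function with $\Phi(x,t)\le 2t$ and $\Phi(x,2t)\le 4\Phi(x,t)$. Moreover $\varphi_+(x,t)=2t/\beta(x)$ is strictly increasing on $(0,\beta(x))$, so $\mu(\Omega\setminus\Omega_I)=0$.

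For the lower bound, note that $\Phi(x,t)\ge 2t-\beta(x)$ for all $t\ge 0$, since the difference is $(t-\beta)^2/\beta$ on $[0,\beta)$. Also $\int_{-1}^1|f-Q|\,dx\ge\int_{-1}^1\operatorname{sgn}(-x)\,(f-Q)\,dx=1$ for every $Q\in S$, because $\int\operatorname{sgn}(x)\,dx=\int\operatorname{sgn}(x)x^2\,dx=0$. Hence $\rho_\Phi(f-Q)\ge 2-\|\beta\|_{L^1}$ for all $Q\in S$. On the other hand, for $\lambda\in[0,1]$ one has $|f-\lambda x^2|=|x|(1+\lambda x)\ge|x|(1-|x|)=\beta(x)$. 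So $\rho_\Phi(f-\lambda x^2)=2\int_{-1}^1|x|(1+\lambda x)\,dx-\|\beta\|_{L^1}=2-\|\beta\|_{L^1}$, and every $\lambda x^2$ with $\lambda\in[0,1]$ lies in $\mathcal M_\Phi(f/S)$.

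The same example shows why your proposed remedies fail.
\begin{itemize}
\item All errors $g_\lambda=f-\lambda x^2$ vanish at the single point $x=0$, which lies in the null set $Z(P_1-P_2)$ rather than in $E$. So the zeros $x_\lambda$ need not be distinct. In general, continuity and $g_1g_2\ge 0$ force every sign change of the errors to occur on $Z(P_1-P_2)$.
\item $|g_\lambda|\ge\beta$ everywhere for every $\lambda$, so your set $\{(x,\lambda):|g_\lambda(x)|<\tfrac12\beta(x)\}$ is empty and no Fubini or Baire argument can help.
\item Membership in $\Omega_I$ gives no quantitative lower bound on $\beta$ near the zero set.
\end{itemize}

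The paper's proof, which otherwise follows your Steps 1--4 (using a zero of $f-P^*$), does not supply the missing idea either. It chooses $\delta_0$ \emph{after} $\delta$ so that $\{x\in U_\delta:\beta(x)>\delta_0\}$ has positive measure, and then asserts $|f-P^*|<\delta_0<\beta$ there. That would require $\delta\le\delta_0$, which is unjustified; in the example above $\{|f-P^*|<\beta\}$ is empty.

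Your suspicion that an extra hypothesis is needed is correct. For instance, if $\operatorname{ess\,inf}_\Omega\beta>0$, as assumed in Theorem~\ref{thm:conditional_uniqueness_rewritten}, choose $1/m\le\operatorname{ess\,inf}\beta$. Then $U_m\cap F_m$ equals $U_m$ up to a null set, and your argument closes.
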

	
	\begin{proof}
		Let $P_1,P_2\in\mathcal{M}_\Phi(f/S)$ and assume $P_1\neq P_2$. By convexity, $P^*=\frac{P_1+P_2}{2}\in\mathcal{M}_\Phi(f/S)$. Since $S$ is a $0$-space, $\mu\left(\{x \in \Omega : P_1(x) = P_2(x)\}\right)=0$. By Lemma~\ref{lem:jackson_musielak},
		\[
		\left[f(x)-P_1(x)\right]\left[f(x)-P_2(x)\right]\ge 0 \quad\text{a.e. in }\Omega,
		\]
		which implies
		\begin{equation} \label{eq-2}
			|f(x)-P^*(x)| = \frac{1}{2}|f(x)-P_1(x)| + \frac{1}{2}|f(x)-P_2(x)|
			\quad\text{a.e. in }\Omega.
		\end{equation}
		
		By Lemma~\ref{lem:interpolation_rewritten}, there exists $x_0\in\Omega$ with $f(x_0)=P^*(x_0)$. Because $\Omega=\overline{\operatorname{int}(\Omega)}$, every neighbourhood of $x_0$ intersected with $\operatorname{int}(\Omega)$ has positive measure. By continuity of $f$ and $P^*$, for any $\delta>0$ the set
		\[
		U_\delta = \{x\in\Omega : |f(x)-P^*(x)|<\delta\}
		\]
		contains a relatively open neighbourhood of $x_0$ and hence $\mu(U_\delta)>0$.
		
		As $\mu(\Omega \setminus \Omega_I) = 0$, from  Proposition~\ref{prop:equiv_caso_I} there 
		exists an $\mathcal{A}$-measurable function $\beta:\Omega\to(0,\infty)$ such that 
		$\Phi(x, \cdot)$ is strictly convex on $(0, \beta(x))$ for a.e. $x \in \Omega$. Since $\beta(x)>0$ a.e., there exists $\delta_0>0$ such that the set
		\[
		E_{\delta_0} = \left\{x\in U_\delta : \beta(x)>\delta_0\right\}
		\]
		has positive measure. 
		
		Set $E = E_{\delta_0} \cap \{P_1\neq P_2\}$. Then $\mu(E)>0$, and for every $x\in E$ we have $|f(x)-P^*(x)|<\delta_0<\beta(x)$. By \eqref{eq-2} and the strict convexity of $\Phi(x,\cdot)$ on $[0,\beta(x)]$,
		\[
		\Phi\left(x,|f(x)-P^*(x)|\right)
		< \frac{1}{2}\Phi\left(x,|f(x)-P_1(x)|\right)
		+ \frac{1}{2}\Phi\left(x,|f(x)-P_2(x)|\right)
		\]
		for a.e.~$x\in E$. Integrating over $\Omega$ yields
		\[
		\rho_\Phi(f-P^*) < \frac{1}{2}\rho_\Phi(f-P_1)+\frac{1}{2}\rho_\Phi(f-P_2)
		= \inf_{Q\in S}\rho_\Phi(f-Q),
		\]
		which contradicts $P^*\in\mathcal{M}_\Phi(f/S)$.
	\end{proof}
	
	In \cite{BenaventeCostaPonceFavier2026}, a necessary and sufficient condition for uniqueness was established for Orlicz spaces when the generating function exhibits linear behavior near the origin and strict convexity elsewhere. We generalize this result to Musielak-Orlicz spaces over multidimensional domains.
	
	\begin{definition}
		For $g\in C(\Omega)$, the zero-set $Z(g)=\{x\in\Omega:g(x)=0\}$ is called a \textbf{$\gamma_g$-set} (with respect to $S$ and $\Phi$) if $0\in\mu_\Phi(g/S)$.
	\end{definition}
	
	\begin{remark}
		If $P\in\mathcal{M}_\Phi(f/S)$, then by linearity of $S$ we have $0\in\mu_\Phi((f-P)/S)$. Consequently $Z(f-P)$ is a $\gamma_{f-P}$-set.
	\end{remark}
	
	\begin{theorem}\label{thm:conditional_uniqueness_rewritten}
		Let \(\Omega\subset\mathbb R^d\) be compact with \(\Omega=\overline{\operatorname{int}(\Omega)}\). Let \(\Phi\) be a Musielak--Orlicz function satisfying \(\Delta_2\). Assume there exists an $\mathcal{A}$-measurable function $\beta:\Omega\to(0,\infty)$ with 
		$\operatorname{ess\,inf}_{x\in\Omega}\beta(x)>0$ such that for almost every 
		$x\in\Omega$ it holds
		\begin{enumerate}[label=(\roman*)]
			\item \(\Phi(x,\cdot)\) is affine on \([0,\beta(x)]\) and
			\item \(\Phi(x,\cdot)\) is strictly convex on \((\beta(x),\infty)\).
		\end{enumerate}
		Let \(S\subset C(\Omega)\) be a finite-dimensional \(0\)-space. Then, for every \(f\in C(\Omega)\), the set \(\mathcal M_\Phi(f/S)\) is a singleton if and only if for every \(f\in C(\Omega)\) and every \(P_1\in\mathcal M_\Phi(f/S)\) one of the following conditions holds
		\begin{enumerate}[label=(\alph*)]
			\item If \(|f(x)-P_1(x)|\le\beta(x)\) for almost every \(x\in\Omega\), then \(0\) is the only element of \(S\) that vanishes on the set \(\gamma_{f-P_1}\).
			\item The set \(\{x\in\Omega:|f(x)-P_1(x)|>\beta(x)\}\) has positive measure.
		\end{enumerate}
	\end{theorem}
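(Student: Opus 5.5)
The plan is to prove the two directions separately, using Lemma~\ref{lem:jackson_musielak}, the characterization Theorem~\ref{thm:characterization}, and the fact that the $0$-space property makes the coincidence set of two distinct elements of $S$ null. Throughout, the (rather informal) notation ``$\gamma_{f-P_1}$'' will be read as the zero set $Z(f-P_1)$, which is a $\gamma_{f-P_1}$-set by the Remark above.

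\emph{Sufficiency.} Fix $f\in C(\Omega)$ and suppose $P_1\neq P_2$ both lie in $\mathcal M_\Phi(f/S)$. Put $P^*=\frac{P_1+P_2}{2}\in\mathcal M_\Phi(f/S)$. By Lemma~\ref{lem:jackson_musielak}, $g_1g_2\ge0$ a.e., so
\[
|f-P^*|=\tfrac12|f-P_1|+\tfrac12|f-P_2|\quad\text{a.e.}
\]
Equality of the modulars then forces
\[
\Phi(x,\tfrac{a+b}{2})=\tfrac12\Phi(x,a)+\tfrac12\Phi(x,b)\quad\text{a.e.},
\]
with $a=|g_1(x)|$ and $b=|g_2(x)|$. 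Since $S$ is a $0$-space, $a\neq b$ a.e., so a.e.\ the pair $a,b$ must lie in a single segment on which $\Phi(x,\cdot)$ is affine. Hypothesis (ii) gives strict convexity on $(\beta(x),\infty)$. Combined with affinity on $[0,\beta(x)]$, this means that at the kink $\beta(x)$ (or anywhere beyond it) a chord straddling or exceeding $\beta(x)$ lies strictly above the graph. Hence $a,b\le\beta(x)$ a.e., and consequently $|f-P^*|\le\beta$ a.e.

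Now apply the assumed dichotomy to $P^*$ in place of $P_1$. Alternative (b) fails, so (a) must hold: $0$ is the only element of $S$ vanishing on $Z(f-P^*)$. But $Z(f-P^*)=Z(g_1)\cap Z(g_2)$ because $g_1g_2\ge0$ pointwise. This pointwise claim follows from continuity of $f,P_i$ and $\Omega=\overline{\operatorname{int}\Omega}$, which upgrade ``a.e.''\ to ``everywhere''. So $P_1-P_2$ vanishes on $Z(f-P^*)$, whence $P_1=P_2$, a contradiction. The main technical point in this direction is the chord argument near $\beta(x)$: I would use the right derivative and note that $\varphi_+(x,\beta(x))>\varphi_0(x)$, which follows from strict convexity just beyond $\beta(x)$.

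\emph{Necessity.} Assume uniqueness for all $f$, and suppose some $f$ and some $P_1\in\mathcal M_\Phi(f/S)$ violate both (a) and (b). Then $|g|\le\beta$ a.e., where $g=f-P_1$, and there is $0\neq R\in S$ with $R=0$ on $Z(g)$. Because $\Phi(x,\cdot)$ is affine with slope $\varphi_0(x)$ on $[0,\beta(x)]$, we have $\rho_\Phi(g-\varepsilon R)=\int\varphi_0|g-\varepsilon R|$ as long as $|g-\varepsilon R|\le\beta$. This must be arranged by perturbing, which is the obstacle, since $|g|$ may touch $\beta$. I would first replace $f$ by a modified $\tilde f$ on which $|g|$ is scaled, e.g.\ $\tilde f=P_1+\theta g$ with $\theta<1$ chosen so that $\theta|g|\le\beta-\eta$, using $\operatorname{ess\,inf}\beta>0$. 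Then check via Theorem~\ref{thm:characterization} that $P_1$ is still optimal for $\tilde f$: the inequality involves only $\operatorname{sgn}(g)$, $\varphi_0$ and $Z(g)$, all of which are unchanged by scaling in the affine region.

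Then the problem reduces to weighted $L^1$ approximation with weight $\varphi_0$. Here $\varepsilon\mapsto\int\varphi_0|\theta g-\varepsilon R|$ is convex. For small $\varepsilon$ it is locally affine, since $R$ vanishes where $g$ does, so by continuity no new zeros are created on a neighbourhood. Its derivative at $0$ is $-\int\varphi_0\operatorname{sgn}(g)R$, and this equals $0$ by the characterization applied to $\pm R$, because the right-hand side there is zero. So $P_1+\varepsilon R$ is also optimal for small $\varepsilon$, contradicting uniqueness. I expect controlling the sign stability of $\theta g-\varepsilon R$ near $Z(g)$ to be the hardest step. I would handle it by showing that the affine formula still gives a convex function whose one-sided derivatives at $0$ are both zero, and then arguing through convexity together with the characterization for $P_1+\varepsilon R$ directly.
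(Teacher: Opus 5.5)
\emph{Sufficiency.} Your argument is correct and is essentially the paper's, reorganized. The paper first treats $\mu\{|f-P_1|>\beta\}>0$ by strict convexity and then applies (a) to $P^*$. You instead get $|g_1|,|g_2|\le\beta$ a.e.\ directly from equality in the midpoint inequality, then apply the dichotomy to $P^*$. This is tidier: it checks that the premise of (a) holds for $P^*$, and (via $\Omega=\overline{\operatorname{int}\Omega}$) that $P_1-P_2$ vanishes at every point of $Z(f-P^*)$, both of which the paper leaves implicit. Drop the remark $\varphi_+(x,\beta(x))>\varphi_0(x)$. It is false in general (e.g.\ $\Phi(t)=t+(t-1)_+^2$, $\beta=1$) and not needed: midpoint equality makes $\Phi(x,\cdot)$ affine on $[\min(a,b),\max(a,b)]$, and no nondegenerate interval containing a point $>\beta(x)$ is an affine piece.

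\emph{Necessity: the perturbation step fails.} Sign stability of $\theta g-\varepsilon R$ near $Z(g)$ is false, and so is the conclusion you draw from it. Take $\Omega=[-1,1]$, $\beta\equiv 2$, $\varphi_0\equiv 1$, $S=\operatorname{span}\{x\}$, $f=g=x^2$, $P_1=0$, $R=x$; both (a) and (b) fail. Then $\theta x^2-\varepsilon x$ changes sign on $(0,\varepsilon/\theta)$ for every $\varepsilon>0$. Moreover $\int_{-1}^1|\theta x^2-\varepsilon x|\,dx=\tfrac{2\theta}{3}+\tfrac{|\varepsilon|^3}{3\theta^2}$ for $|\varepsilon|<\theta$: both one-sided derivatives vanish at $0$, yet $0$ is a strict minimum. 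Since $\rho_\Phi\ge\int\varphi_0|\cdot|$, $\tilde f$ has the unique best approximant $0$. So non-uniqueness cannot be found at $\tilde f$; it must be produced at a different function.

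That is the paper's key idea. Take $T=\lambda Q/(2\|Q\|)$ with $\lambda\le\operatorname{ess\,inf}\beta$, and $h=|T|\operatorname{sgn}(f-P_1)$.
\begin{enumerate}
\item $h$ is continuous, because $T=0$ on $Z(f-P_1)$.
\item $h$ has the sign of $f-P_1$, and the same zero set up to the null set $Z(T)$.
\item For $0<\varepsilon<1$, $h-\varepsilon T=|T|(\operatorname{sgn}(f-P_1)-\varepsilon\operatorname{sgn}T)$ keeps the sign of $h$ and satisfies $|h-\varepsilon T|<\lambda$.
\end{enumerate}
Hence $\rho_\Phi(h-\varepsilon T)=\rho_\Phi(h)-\varepsilon\int_\Omega\varphi_0\operatorname{sgn}(h)T\,d\mu$ exactly. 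The characterization, applied to $h$ and to $\pm Q$, gives $0\in\mathcal M_\Phi(h/S)$ and makes that integral vanish. So every $\varepsilon T$ is also a best approximant of $h$.

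\emph{A second problem, shared with the paper.} The paper asserts $\varphi_+(x,t)=\varphi_0(x)$ for all $t\in[0,\beta(x)]$, and your scaling step assumes the same. Suppose $|f-P_1|=\beta(x)$ on a set of positive measure where $\varphi_+(x,\beta(x))>\varphi_0(x)$. Then optimality of $P_1$ only gives the characterization with the larger coefficient on $\{(f-P_1)R<0\}$, which is weaker than the weighted-$L^1$ inequality you need. This cannot be repaired, because the ``only if'' direction is then false.

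Counterexample: let $\Omega=[-1,1]$, $u(x)=x-\tfrac12$, $S=\operatorname{span}\{u\}$, $\beta\equiv1$, with $\Phi(t)=t$ on $[0,1]$ and $\Phi(t)=1+10(t-1)+(t-1)^2$ for $t>1$.
\begin{enumerate}
\item For $f\equiv1$, the one-sided derivatives of $t\mapsto\rho_\Phi(1-tu)$ at $0$ are $-\tfrac18+10\cdot\tfrac98>0$ and $10\cdot\tfrac18-\tfrac98>0$. So $P_1=0$ is the best approximant, $|f-P_1|\le\beta$, and $Z(f-P_1)=\emptyset$; hence (a) and (b) both fail.
\item Yet every $f\in C([-1,1])$ has a unique best approximant. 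By your sufficiency argument, two best approximants $au\ne bu$ force $|f-tu|\le1$ for $t\in[a,b]$, where $\rho_\Phi(f-tu)=\int|f-tu|$.
\item So the convex function $t\mapsto\int|f-tu|$ is constant, hence minimal, on $[a,b]$.
\item Then $g_0=f-\tfrac{a+b}{2}u$ satisfies $|g_0|\ge\tfrac{b-a}{2}|u|$, by sign coherence and continuity. So $g_0$ has constant signs $\sigma_1,\sigma_2$ on $[-1,\tfrac12)$ and $(\tfrac12,1]$.
\item Optimality of $0$ for $g_0$ would force $\int\operatorname{sgn}(g_0)u=-\tfrac98\sigma_1+\tfrac18\sigma_2=0$, which is impossible.
\end{enumerate}

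Necessity therefore needs an extra hypothesis, e.g.\ $|f-P_1|<\beta$ a.e.\ in the premise of (a), or $\varphi_+(x,\beta(x))=\varphi_0(x)$ a.e. Under either, the $h$-construction goes through, and your sufficiency proof still applies, since two distinct best approximants give $|f-P^*|<\beta$ a.e.
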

	
	\begin{proof}
		\noindent\textit{Necessity.} Assume $\mathcal{M}_\Phi(f/S)=\{P_1\}$ and that (b) fails, so
		$$
		|f(x)-P_1(x)|\le\beta(x) \quad \text{a.e.}
		$$
		We must show that (a) holds. Suppose, for the sake of contradiction, that there exists $Q\in S\setminus\{0\}$ with $Q=0$ on the $\gamma_{f-P_1}$-set $Z(f-P_1)$.
		
		Since $\operatorname{ess\,inf}_{x\in\Omega}\beta(x)>0$, there exists $\lambda>0$ such that $\beta \ge \lambda$ a.e. on $\Omega$.
		Define the auxiliary continuous functions
		\[
		T(x)= \frac{\lambda Q(x)}{2\|Q\|}\quad \text{and} \quad h(x)=|T(x)|\operatorname{sgn}\left(f(x)-P_1(x)\right).
		\]
		Since $T$ vanishes on $Z(f-P_1)$, we obtain $h=0$ on $Z(f-P_1)$. Because $\Phi(x,t)=w(x)t$ for $t\in[0,\beta(x)]$, we have $0<\varphi_-(x,t)=\varphi_+(x,t)=w(t)$ for $t\in[0,\beta(x)]$. Thus, Theorem~\ref{thm:characterization} applied to $P_1$ for $\pm R \in S$ yields
		\begin{equation*}
			\left|\int_{\Omega\setminus Z(f-P_1)} w(x)\operatorname{sgn}\left(f(x)-P_1(x)\right)R(x)d\mu(x)\right| \le \int_{ Z(f-P_1)} \varphi_+(x,0)|R|d\mu
		\end{equation*}
		for all $R\in S$. So,
		\[
		\left|\int_{\Omega\setminus Z(h)} w(x)\operatorname{sgn}\left(h(x)\right)R(x)d\mu(x)\right| \le \int_{ Z(h)} \varphi_+(x,0)|R|d\mu,
		\text{ for all } R\in S,
		\]
		since $\operatorname{sgn}(h)=\operatorname{sgn}(f-P_1)$ a.e. and $Z(f-P_1) \subset Z(h)$. Hence $0\in\mu_\Phi(h/S)$ and therefore
		$Z(h)$ is a $\gamma_h$-set.
		
		In particular, taking $R=Q$ we get
		\[
		\int_{\Omega} w(x)\operatorname{sgn}(h(x))Q(x)d\mu(x)=0.
		\]
		Now set $P_\varepsilon:=\varepsilon T$ for $0<\varepsilon<1$. As
		$$
		h-\varepsilon T= |T|\left(\operatorname{sgn}(f-P_1)-\varepsilon \operatorname{sgn}(T)\right),
		$$
		it is easy to see that
		$$
		\operatorname{sgn}(h-\epsilon T)=\operatorname{sgn}(f-P_1), \quad  \text{a.e.}\quad \text{and} \quad |h-\epsilon T| \le 2\|T\|\le \lambda \le \beta(x).
		$$
		Therefore $|h(x)-\varepsilon T(x)|=\operatorname{sgn}(h(x))(h(x)-\varepsilon T(x))=|h(x)|-\varepsilon \operatorname{sgn}(h(x)) T(x)$ a.e. and, consequently,
		\begin{align*}
			\rho_\Phi(h-\varepsilon T)
			&=\int_\Omega w(x)|h(x)-\varepsilon T(x)|d\mu(x) \\
			&=\int_\Omega w(x)|h(x)|d\mu(x)
			-\varepsilon\int_\Omega w(x)\operatorname{sgn}(h(x))T(x)d\mu(x) \\
			&=\rho_\Phi(h).
		\end{align*}
		Thus $\varepsilon T\in\mu_\Phi(h/S)$ for all $0<\varepsilon<1$, and we reach a contradiction.
		
		\medskip \noindent \textit{Sufficiency.} Assume that for every $P\in\mathcal{M}_\Phi(f/S)$ 
		either (a) or (b) holds. Let $P_1,P_2\in\mathcal{M}_\Phi(f/S)$ with 
		$P_1\neq P_2$. By convexity of $\rho_\Phi$,  $P^*\in\mathcal{M}_\Phi(f/S)$. 
		Since $S$ is a $0$-space, $\mu\left(\{x \in \Omega : P_1(x) = P_2(x)\}\right)=0$, so $P_1\neq P_2$ 
		a.e. By Lemma~\ref{lem:jackson_musielak}, $g_i:=f-P_i$ satisfy 
		$g_1(x)g_2(x)\geq 0$ for a.e. $x\in\Omega$, so $g_1$ and $g_2$ have the 
		same sign or at least one vanishes a.e. In either case,
		\begin{equation} \label{eq-Id}
			|f(x)-P^*(x)| = \frac{1}{2}|f(x)-P_1(x)|+\frac{1}{2}|f(x)-P_2(x)|
			\quad\text{for a.e. }x\in\Omega. 
		\end{equation}
		
		\medskip\noindent\textit{Case (b).} Suppose $\mu(E)>0$ where 
		$E=\{x\in\Omega:|f(x)-P_1(x)|>\beta(x)\}$. 
		By Lemma~\ref{lem:jackson_musielak}, $|f(x)-P_1(x)|\neq|f(x)-P_2(x)|$ 
		for a.e. $x\in E$. Fix such an $x\in E$ and set $s=|f(x)-P_1(x)|>\beta(x)$ 
		and $t=|f(x)-P_2(x)|\geq 0$ with $s\neq t$. By~\eqref{eq-Id}, 
		$|f(x)-P^*(x)|=\frac{s+t}{2}$. Since $\Phi(x,\cdot)$ is affine with slope 
		$w(x)$ on $[0,\beta(x)]$,
		\begin{equation}\label{eq:affine}
			\Phi(x,u) = \Phi(x,0)+w(x)u \quad \text{for all } u\in[0,\beta(x)].
		\end{equation}
		Moreover, since $\Phi(x,\cdot)$ is strictly convex on $(\beta(x),\infty)$, 
		its right derivative satisfies $\varphi_+(x,u)>w(x)$ for all $u>\beta(x)$, 
		so that for $s>\beta(x)$,
		\begin{equation}\label{eq:strict_slope}
			\begin{split}
				\Phi(x,s)& =\Phi(x,\beta(x))+\int_{\beta(x)}^s\varphi_+(x,u)du 
				> \Phi(x,\beta(x))+w(x)(s-\beta(x)) \\ & =\Phi(x,0)+w(x)s.
			\end{split}
		\end{equation}
		We claim that
		\begin{equation}\label{eq:strict_conv_E}
			\Phi\left(x,\frac{s+t}{2}\right) 
			< \frac{1}{2}\Phi(x,s)+\frac{1}{2}\Phi(x,t),
		\end{equation}
		and consider two subcases according to the position of $\frac{s+t}{2}$ 
		relative to $\beta(x)$.
		
		\medskip\noindent\textit{Subcase $\frac{s+t}{2}>\beta(x)$.} Both 
		$\frac{s+t}{2}$ and $s$ lie in $(\beta(x),\infty)$. If 
		\eqref{eq:strict_conv_E} were an equality, then $\Phi(x,\cdot)$ would be 
		affine on $[t,s]$, and in particular on $[\beta(x),s]$, contradicting the 
		strict convexity of $\Phi(x,\cdot)$ on $(\beta(x),\infty)$. Hence 
		\eqref{eq:strict_conv_E} holds.
		
		\medskip\noindent\textit{Subcase $\frac{s+t}{2}\leq\beta(x)$.} Then 
		$t\leq\frac{s+t}{2}\leq\beta(x)<s$, so by \eqref{eq:affine},
		\begin{equation*}
			\begin{split}
				\Phi\left(x,\frac{s+t}{2}\right) 
				&= \Phi(x,0)+w(x)\frac{s+t}{2} \\
				&= \frac{1}{2}\left(\Phi(x,0)+w(x)t\right)+\frac{1}{2}\left(\Phi(x,0)+w(x)s\right) \\
				&= \frac{1}{2}\Phi(x,t)+\frac{1}{2}\left(\Phi(x,0)+w(x)s\right).
			\end{split}
		\end{equation*}
		By \eqref{eq:strict_slope}, $\Phi(x,s)>\Phi(x,0)+w(x)s$, and therefore
		\[
		\frac{1}{2}\Phi(x,s)+\frac{1}{2}\Phi(x,t) 
		> \frac{1}{2}\left(\Phi(x,0)+w(x)s\right)+\frac{1}{2}\Phi(x,t)
		= \Phi\left(x,\frac{s+t}{2}\right),
		\]
		which gives \eqref{eq:strict_conv_E}.
		
		\medskip In both subcases \eqref{eq:strict_conv_E} holds for a.e. $x\in E$. 
		On $\Omega\setminus E$, the standard convexity of $\Phi(x,\cdot)$ gives the 
		non-strict inequality. Since $\mu(E)>0$, integrating over $\Omega$ yields
		\[
		\rho_\Phi(f-P^*) 
		< \frac{1}{2}\rho_\Phi(f-P_1)+\frac{1}{2}\rho_\Phi(f-P_2)=\rho,
		\]
		contradicting $P^*\in\mathcal{M}_\Phi(f/S)$.
		
		\medskip\noindent\textit{Case (a).} Suppose (b) fails and (a) holds, so 
		$|f(x)-P_1(x)|\leq\beta(x)$ for a.e. $x\in\Omega$. Let $Q=P_1-P_2\in S$. 
		For a.e. $x\in Z(f-P^*)$, equation \eqref{eq-Id} gives 
		$|f(x)-P_1(x)|=|f(x)-P_2(x)|=0$ a.e., whence $Q(x)=P_1(x)-P_2(x)=0$ 
		for a.e. $x\in Z(f-P^*)$. Thus $Q$ vanishes a.e. on $Z(f-P^*)$, which is 
		a $\gamma_{f-P^*}$-set since $P^*\in\mathcal{M}_\Phi(f/S)$. By condition 
		(a) applied to $P^*$, we conclude $Q=0$, i.e. $P_1=P_2$, a contradiction.
	\end{proof}

	\section*{Declarations}
	
	\subsection*{Funding:} This work was supported by CONICET, Universidad Nacional de San Luis (Grant PI UNSL 03/3026), Universidad Nacional de R\'io Cuarto (Grant PPI 18/C614-2), and Universidad Nacional de La Pampa, Facultad de Ingenier\'ia (Grant Resol. Nro. 203/23).
	
	\subsection*{Conflict of interest:} The authors declare that they have no conflict of interest.
	
	\subsection*{Data availability:} No datasets were generated or analyzed during the development of this paper.
	
	\subsection*{Code availability:} Not applicable.
	
	\subsection*{Author contribution:} All authors contributed equally to the development of this paper.


\begin{thebibliography}{99}
		
		\bibitem{BenaventeCostaPonceFavier2026}
		A. Benavente, J. Costa Ponce, and S. Favier, \textit{On the uniqueness of best approximation in Orlicz spaces}, Constructive Mathematical Analysis, vol. 9, no. 2, pp. 62--71, 2026.
		\url{https://doi.org/10.33205/cma.1883206}
		
		\bibitem{BenaventeFavierLevis2017}
		A. Benavente, S. Favier, and F. E. Levis, \textit{Existence and characterization of best $\varphi$-approximations by linear subspaces}, Advances in Pure and Applied Mathematics, vol. 8, no. 3, pp. 209--217, 2017. \url{http://dx.doi.org/10.1515/apam-2015-0069}
		
		\bibitem{CostaPonce2024}
		J. Costa Ponce, \textit{Mejores Aproximantes en Espacios de Orlicz, Interpolaci{\'o}n}, Ph.D. thesis, Universidad Nacional de San Luis, Argentina, 2024.\url{https://drive.google.com/file/u/0/d/1vNVIpAxe_LCdMCtdaW6Wp9Fx_IVD6Nnl/view}
		
		\bibitem{HarjulehtoHasto2019}
		P. Harjulehto and P. H{\"a}st{\"o}, \textit{Orlicz Spaces and Generalized Orlicz Spaces}, Lecture Notes in Mathematics, vol. 2227, Springer, Cham, 2019. \url{https://doi.org/10.1007/978-3-030-15100-3}
		
		\bibitem{key:K}
		S. Karlin, W. Studden, Tchebycheff systems: With applications in analysis and statistics. Pure and Applied Mathematics, Wiley \& Sons, New York, 1966. ISBN-13: 978-0470458655.
		
		\bibitem{Kilmer1990}
		S. Kilmer, W. Koz{\l}owski, and G. Lewicki, \textit{Best approximants in modular function spaces}, Journal of Approximation Theory, vol. 63, no. 3, pp. 338--367, 1990. \url{https://doi.org/10.1016/0021-9045(90)90126-B}
		
		\bibitem{Kopaliani2021}
		T. Kopaliani, N. Samashvili, and S. Zviadadze, \textit{Extension of the best polynomial approximation operator in Musielak-Orlicz spaces}, Publicationes Mathematicae Debrecen, vol. 99, no. 1-2, pp. 101--115, 2021. \url{https://doi.org/10.5486/PMD.2021.8851}
		
		\bibitem{Kozlowski1988}
		W. M. Kozlowski, \textit{Modular Function Spaces}, Marcel Dekker, Inc., New York, 1988. ISBN-13: 978-0824780012.
		
		\bibitem{Krasnoselskii1961}
		M. A. Krasnoselskii and I. A. B. Rutickii, \textit{Convex Functions and Orlicz Space}, translated by L. F. Boron, P. Noordhoff, 1961, pp. 23--28, 60--66. ISBN-13: 978-0677202105.
		
		\bibitem{Landers1980}
		D. Landers and L. Rogge, \textit{Best approximants in $L_{\Phi}$-spaces}, Zeitschrift f{\"u}r Wahrscheinlichkeitstheorie und Verwandte Gebiete, vol. 51, no. 2, pp. 215--237, 1980. \url{https://doi.org/10.1007/BF00536190}
		
		\bibitem{MendezLang2019}
		O. Mendez and J. Lang, \textit{Analysis on Function Spaces of Musielak-Orlicz Type}, Chapman and Hall/CRC Monographs and Research Notes in Mathematics, CRC Press, Boca Raton, 2019.\url{https://doi.org/10.1201/9781498762618}
		
		\bibitem{Musielak1983_monograph}
		J. Musielak, \textit{Orlicz Spaces and Modular Spaces}, Lecture Notes in Mathematics, vol. 1034, Springer-Verlag, Berlin, 1983. \url{https://doi.org/10.1007/BFb0072210}
		
		\bibitem{MusielakOrlicz1959}
		J. Musielak and W. Orlicz, \textit{On modular spaces}, Studia Mathematica, vol. 18, no. 1, pp. 49--65, 1959. ISSN: 0039-3223.
		
		
		
		\bibitem{Folland1999}
		G. B. Folland, \textit{Real Analysis: Modern Techniques and Their Applications}, 2nd edition, John Wiley \& Sons, 1999, p. 62. ISBN-13: 978-0471317166.
		
		\bibitem{Rice64} 
		J. Rice, \textit{The Approximation of Functions}, Addison-Wesley Publishing Company, Inc., London, 1964. ISBN-13: 978-0201064308.
		
		\bibitem{key:Sc}
		L. Schumaker, Spline functions: Basic theory, Cambridge University Press, Cambridge, 2007. \url{https://doi.org/10.1017/CBO9780511618994}
		
	\end{thebibliography}
	\end{document}